\theoremstyle{definition}
\newtheorem{Def}{Definition}[section]
\newtheorem{rem}[Def]{Remark}
\theoremstyle{plain}
\newtheorem{prop}[Def]{Proposition}
\newtheorem{thm}[Def]{Theorem}
\newtheorem*{thm*}{Theorem}
\newtheorem{lem}[Def]{Lemma}
\newtheorem{cor}[Def]{Corollary}
\newtheorem*{cor*}{Corollary}
\newtheorem*{con*}{Conjecture}
\newtheorem*{verm*}{Vermutung}
\newcommand{\nfield}{\mathbf n}
\newcommand{\quash}[1]{}
\newcommand{\im}{\operatorname{im}}
\newcommand{\Rea}{\operatorname{Re}}
\newcommand{\T}{\operatorname{T}}
\newcommand{\GL}{\operatorname{{\mathbf GL}}}
\newcommand{\SL}{\operatorname{{\mathbf SL}}}
\newcommand{\cA}{{\mathcal A}}
\newcommand{\C}{{\mathbb C}}
\newcommand{\R}{{\mathbb R}}
\newcommand{\Q}{{\mathbb Q}}
\newcommand{\N}{{\mathbb N}}
\newcommand{\Z}{{\mathbb Z}}
\title[Periodic hypersurfaces and Lee--Yang polynomials]{Periodic hypersurfaces and Lee--Yang polynomials}
\subjclass[2020]{Primary 52C23,42A38 ; Secondary 14P05, 42A75}
\author{Lior Alon}
\address{Massachusetts Institute of Technology, USA} 
\email{lioralon@mit.edu}
\author{Mario Kummer}
\address{Technische Universit\"at, Dresden, Germany} 
\email{mario.kummer@tu-dresden.de}
\begin{document}

\begin{abstract}
We study periodic measures on $\R^n$ whose Fourier transform is 
confined to a proper double cone, in the sense of Meyer’s notion of 
\emph{lighthouse measures}. Lee--Yang polynomials provide a natural family of examples: it follows 
from the work of Kurasov and Sarnak that the torus zero sets of such 
polynomials are hypersurfaces supporting directional lighthouse measures.

We prove a rigidity theorem showing that, under mild assumptions, this 
is essentially the only possibility. Any periodic $C^{1+\varepsilon}$ hypersurface 
supporting a directional lighthouse measure must arise as the torus zero 
set of an essentially Lee--Yang polynomial.
The proof is based on the recent classification of one-dimensional 
Fourier quasicrystals and provides a geometric interpretation of this theory.
\end{abstract}
\maketitle

\section{Introduction}

In Fourier analysis, strong localization of a function in physical space
typically prevents localization of its Fourier transform, due to the uncertainty principle. 
While this phenomenon is well understood for functions, the situation for measures is more subtle: extreme localization of a measure and its Fourier transform can occur simultaneously.
Two fundamental examples illustrate this phenomenon. 
First, the Poisson summation formula shows that the Fourier support of the counting measure on $\Z^n$ is again $\Z^n$. 
Second, if $V \subset \R^n$ is a vector subspace, then the Fourier transform of Lebesgue measure on $V$ is supported on its orthogonal complement $V^\perp$. 
In both cases, the support and the Fourier support are unbounded but of Lebesgue measure zero.

In this work, we consider the $\Z^n$-periodic setting in $\R^n$. 
Given a $\Z^n$-periodic Radon measure $m$, we define its Fourier coefficients by
\[
\widehat{m}(k)=\int_{\R^n/\Z^n} e^{-2\pi i\langle k,x\rangle}\,dm(x),
\qquad k\in\Z^n.
\]
In particular, $\int_{\R^n} \hat{f}\,dm = \sum_{k\in\Z^n} \widehat{m}(k)\,f(k)$ for any Schwartz function $f$ on $\R^n$. 
The examples above admit a natural periodic analogue: if $V \subset \R^n$ is a rational subspace and $m$ is the Lebesgue measure on $V+\Z^n$, then $\widehat{m}$ is supported on $\Z^n \cap V^\perp$. 
Here, the support of $m$, while unbounded, is confined to a thin subset of $\R^n$, and its Fourier support is likewise confined to a thin subset of $\Z^n$.

Beyond such flat examples, it is less obvious whether a measure can remain geometrically thin while its Fourier transform is confined to a small set of directions. 
It turns out that this question is closely related to the study of (not necessarily periodic) discrete measures whose Fourier support is also a discrete measure. 
If such a measure is tempered, it is called a \emph{crystalline measure}. Meyer \cite{Meyer2023} observed that several constructions of higher-dimensional
crystalline measures can be realized as pointwise products of singular measures whose
Fourier transforms are supported in cones. This insight led Meyer to define
lighthouse measures.

\begin{Def}[Lighthouse measure]
A pair $(\mu,C)$ of a non-negative $\Z^n$-periodic Radon measure $\mu$
on $\R^n$ and a proper\footnote{A closed convex cone $C\subset\R^n$ is a \emph{proper cone} if it has non-empty interior and if there is a hyperplane $H\subset\R^n$ with $C\cap H=\{0\}$.} cone $C\subset\R^n$ is called a \emph{lighthouse} if
\begin{enumerate}
    \item $\mu$ is supported on a closed set of Lebesgue measure zero,
    \item $\mathrm{supp}(\widehat{\mu})\subset \Z^n\cap(-C\cup C)$.
\end{enumerate}
\end{Def}

In this work, we investigate the structure of lighthouse measures.
Our first result shows that, under mild assumptions, their support must
have codimension one.

\begin{thm}[Dimension of support]\label{thm: dimension}
Let $(\mu,C)$ be a $\Z^n$-periodic lighthouse measure in $\R^n$. If there
exists a $d$-dimensional $C^{1+\epsilon}$-submanifold $M\subset\R^n$ and
an open set $B$ such that $d\mu|_B=\rho\,d\sigma|_B$ with $\rho>0$
continuous, then $d=n-1$. Moreover, $M$ admits an orientation for which
the normal satisfies $\nfield(x)\in C$ for all $x\in M\cap B$.
\end{thm}
Hence, from here on, we focus on lighthouse measures supported on hypersurfaces.
The condition $\nfield(x)\in C$ has a natural geometric consequence: 
any direction $\ell\in\R^n$ which is \emph{positive} on $C$, i.e.
$\langle \ell, x \rangle > 0 \text{ for all } x \in C \setminus \{0\},$ satisfies
\[
\langle \ell, \nfield(x) \rangle > 0 \quad \text{for all } x \in \Sigma.
\]
Such transversal directions naturally define measures on hypersurfaces that we call \emph{directed measures}. 
Given a direction $\ell\in\mathbb{R}^n$, the \emph{$\ell$-directed measure} $m_\ell$ on $\Sigma$ is the positive Radon measure with density
\[
dm_{\ell}(x)=|\langle \mathbf{n}(x),\ell\rangle|\,d\sigma(x),
\]
where $\mathbf{n}(x)$ is the unit normal to $\Sigma$ and $d\sigma$ is the surface measure\footnote{The $n-1$ dimensional measure inherited from the Lebesgue measure on $\R^n$}. In the presence of the lighthouse condition, the normal vector satisfies 
$\langle \mathbf{n}(x),\ell\rangle>0$ on the support, and hence the absolute value may be removed. In such case, the map $\ell\mapsto m_\ell$ is linear on the set of directions $\ell$ that are positive on the lighthouse cone.

Meyer provided a construction of lighthouse measures in \cite{Meyer2023}, using inner functions, and observed a connection to the crystalline measures that were constructed by Kurasov--Sarnak \cite{kurasovsarnak} using \emph{Lee--Yang polynomials}. The precise statement is as follows, see \cite{alon2025higher}*{Theorem 4.32}. Let $p$ be a Lee--Yang polynomial and let $\Sigma=f^{-1}(0)$ be the zero set of the exponential polynomial
\[
f(x)=p(e^{2\pi i x_{1}},\ldots,e^{2\pi i x_{n}})=:p(\exp(2\pi i x)).
\]
Then for $C=\R_{\ge0}^n$ and any positive direction $\ell\in \R_{+}^n$, the pair $(m_{\ell},C)$ is a lighthouse. 
This construction naturally extends to a wider class of polynomials:
We say that a Laurent polynomial $p$ is \emph{essentially Lee--Yang} if there exists a nonempty open convex cone $K \subset \mathbb{R}^n$ such that
\[
p(\exp(z)) \neq 0 \quad \text{for all \ }\   z \in \mathbb{C}^n \text{ with $\ \Rea (z)$ in}\    K \cup (-K),
\]
where $\exp(z)=(e^{z_1},\ldots,e^{z_n})$ and $\Rea (z)=(\Rea (z_1),\ldots,\Rea (z_n))$. Lee--Yang polynomials are the special case when $K$ is the positive orthant $\mathbb{R}_{+}^n$. If $p$ is essentially Lee-Yang, then, up to possibly taking a smaller $K$, there exists a proper cone $C$ for which $K=\mathrm{int}(C^{*})$ is the cone of positive directions on $C$. 

In \Cref{thm: moreover} we show that, assuming $\Sigma=\{x\in\R^n\mid p(\exp(2\pi ix))=0\}$ to be non-singular, for any positive direction $\ell\in K$ the pair $(m_\ell,C)$ is a lighthouse. 

\medskip

Our main result, \Cref{thm: main}, is an inverse theorem showing that this class precisely captures hypersurfaces supporting lighthouse measures.

\medskip

\textbf{The main result of this paper is that if $\Sigma$ is a $C^{1+\epsilon}$ periodic hypersurface, and $(m_{\ell}, C)$ is a lighthouse measure for some proper cone $C$ and some direction $\ell$ positive on $C$, then $\Sigma=\{x\in\R^n\mid p(\exp(2\pi ix))=0\}$ for some $p$ essentially Lee--Yang polynomial.}

\medskip

The study of Lee--Yang polynomials originated from the celebrated ``Circle Theorem'' of Lee and Yang \cite{LeeYang1952}, where they examine phase transitions in statistical mechanics. These polynomials later became a central object in the extensive work of Borcea and Br\"and\'en \cites{borbra1,borbra2,borbra3} on linear operators preserving stability. Another notable result is the characterization of Lee--Yang polynomials by Ruelle \cite{Ruelle2010}. Our result can be interpreted as a new characterization of Lee--Yang polynomials in terms of their torus zero sets.

The proof of our main theorem is based on the classification of one-dimensional Fourier quasicrystals. A \emph{Fourier quasicrystal} (FQ) is a set\footnote{Possibly with multiplicities.} $\Lambda$ whose counting measure $\mu_{\Lambda}$ is crystalline, and such that $|\widehat{\mu}_{\Lambda}|$ is also tempered. Recent works \cites{kurasovsarnak,olevskii2020fourier,AlonCohenVinzant} classified all one-dimensional Fourier quasicrystals in terms of multivariate {Lee--Yang polynomials}. We now proceed with definitions and a precise statement of the main result \Cref{thm: main}.

\section{Preliminaries and results}
\subsection{Preliminaries} A \emph{cone} $C\subset \mathbb{R}^n$ is a set with the property that $x\in C\iff \lambda x\in C$ for all $\lambda\in (0,\infty)$. We will say that $C$ is a \emph{proper cone}, if it is a convex closed cone with non-empty interior, which is properly contained in a half space: there exists a  direction $\ell\in \mathbb{R}^n$ such that $\langle \ell,x\rangle>0$ for all $x\in C\setminus\{0\}$. The dual cone is defined by 
\[C^*:= \{\ell\in\mathbb{R}^n\mid\forall x\in C:\langle\ell,x\rangle
\ge0\}.\]
If $C$ is proper, then $C^*$ is proper, and $(C^*)^*=C$.
The set of \emph{positive directions} is the interior of the dual cone:
\[\mathrm{int}(C^*):= \{\ell\in\mathbb{R}^n\mid\forall x\in C\setminus\{0\}:\langle\ell,x\rangle>0\}.\]
Recall that for $0<\epsilon<1$ a function is in the class $C^{1+\epsilon}$ if its derivative exists and is $\epsilon$-H\"older continuous. By an \emph{immersed $C^{1+\epsilon}$-hypersurface} in $(\R/\Z)^n$ we mean the image of an immersion $\Sigma=\iota(M)$, where $\iota:M\to(\R/\Z)^n$ is $C^{1+\epsilon}$, and $M$ is a compact $(n-1)$-dimensional manifold. The normal vector field $\nfield$ and the surface area $d\sigma$ on $\Sigma$, are defined on $M$ via local embeddings, and then pushed forward to $\Sigma$ via the immersion $\iota$. Directional measures $m_{\ell}$ are also defined in this way. 
Similarly, a \emph{periodic immersed $C^{1+\epsilon}$-hypersurface} $\Sigma\subset\R^n$ is the preimage of an immersed $C^{1+\epsilon}$-hypersurface in $(\R/\Z)^n$ under the natural projection $\R^n\to(\R/\Z)^n$.
 Locally, up to rotation, such $\Sigma$ is a finite union of graphs of $C^{1+\epsilon}$-functions.  The set of Lee--Yang polynomials $p\in\C[z_1,\ldots,z_n]$ is denoted by $\mathrm{LY}_{n}$.
Any (Laurent) polynomial $p(z_1,\ldots,z_{n})$ defines a periodic analytic variety  
\begin{equation}\label{Sigma(p)}
    \Sigma(p):=\{x\in\R^n\mid p(e^{2\pi i x_{1}},\ldots,e^{2\pi i x_{n}})=0 \}.
\end{equation}
We say that $p$ is \emph{regular} if $p$ and $\nabla p(z)$ do not vanish simultaneously at points $z\in\T^n$ on the product of the unit circle $\T\subset\C$. For regular $p$ the periodic set $\Sigma(p)$ is a smooth manifold.
\begin{Def}[Essentially Lee--Yang polynomial]\label{def: ess LY}
We say that a Laurent polynomial $p$ is \emph{essentially Lee--Yang} with respect to the open cone $\emptyset\neq K \subset \mathbb{R}^n$ if
\[
p(\exp(z)) \neq 0 \quad \text{for all \ }\   z \in \mathbb{C}^n \text{ with}\ \Rea (z)\in\left( K \cup (-K)\right),
\]
where $\exp(z)=(e^{z_1},\ldots,e^{z_n})$ and $\Rea (z)=(\Rea( z_1),\ldots,\Rea( z_n))$.
\end{Def}
\subsection{Main result}
Meyer's observation \cite{Meyer2023}, a proof of which can be found in \cite{alon2025higher}*{Theorem 4.32} when specialized to $d=1$, is that Lee--Yang polynomials provide non-trivial examples of lighthouse measures. By linear change of coordinates, we can state it in greater generality.

First note that if $p$ is essentially Lee-Yang then there exists a proper cone $C\subset\R^n$, such that $p$ is essentially Lee--Yang with respect to the open cone $K=\textnormal{int}(C^*)$ of positive directions of $C$. 
\begin{thm}[Lee--Yang construction of lighthouse]\label{thm: moreover} Let $p$ be a regular essentially Lee--Yang polynomial. Then there exists a proper cone $C\subset\R^n$ (as above) such that, for any  $\ell\in K=\textnormal{int}(C^*)$, the pair $(m_{\ell},C)$ is a lighthouse, where $m_{\ell}$ is the directed measure on $\Sigma(p)$.
\end{thm}

Our main theorem is the inverse result:
\begin{thm}[Lighthouse implies Lee--Yang]\label{thm: main}
    Let $\Sigma\subset\R^n$ be a periodic immersed $C^{1+\epsilon}$-hypersurface. If there exists a proper cone $C$ and a direction $\ell$ that is positive on $C$ and has $\Q$-linearly independent entries, such that $(m_{\ell},C)$ is a lighthouse, then there exists an essentially Lee--Yang polynomial $p$ such that 
\[\Sigma=\Sigma(p)=\{x\in\R^n\mid p(e^{2\pi i x_{1}},\ldots,e^{2\pi i x_{n}})=0 \}.\]
If $\Sigma$ is a manifold, then $p$ is regular.
\end{thm}

\subsection{Characterization of 1D Fourier quasicrystals and breakdown of \Cref{thm: main}}
Our proof is based on a recent characterization of all one-dimensional FQs in terms of Lee--Yang polynomials due to Kurasov and Sarnak \cite{kurasovsarnak}, Olevskii and Ulanovskii \cite{olevskii2020fourier}, and the first author together with Cohen and Vinzant \cite{AlonCohenVinzant}.   
\begin{Def}[Fourier quasicrystal]\label{def:FQ}
    A multiset $\Lambda\subset\R^n$ is called a \emph{Fourier quasicrystal (FQ)} if $\Lambda$ is locally finite and there exists a locally finite set $S(\Lambda)\subset\R$, called the \emph{spectrum of $\Lambda$}, with non-zero complex coefficients $\left(c_{\xi}\right)_{\xi\in S(\Lambda)}$, such that 
    \[\sum_{x\in\Lambda}\hat{f}(x)=\sum_{\xi\in S(\Lambda)}c_{\xi}{f}(\xi),\quad\text{and}\quad \sum_{x\in\Lambda}|{f}(x)|+\sum_{\xi\in S(\Lambda)}|c_{\xi}{f}(\xi)|<\infty,\]
    for every Schwartz function $f$. Here the sums over $\Lambda$ are taken with multiplicities.
\end{Def}
The characterization of one-dimensional Fourier quasicrystals is as follows: 
\begin{thm}[\cites{kurasovsarnak,olevskii2020fourier,AlonCohenVinzant,AlonVinzant}]\label{thm:1dFQ}
    For a multiset $\Lambda\subset\R$ the following are equivalent:
    \begin{enumerate}
        \item $\Lambda$ is a one-dimensional FQ.
        \item $\Lambda$ is the zero set of $f(t)=p(\exp(2\pi it\ell))$ for some $n\in\N$, Lee--Yang polynomial $p\in\mathrm{LY}_{n}$ and positive vector $\ell\in\R^{n}_{+}$ with $\Q$-linearly independent entries. That is,
        \[\Lambda=\{t\in\R\mid t\ell\in\Sigma(p)\},\]
        and the multiplicity of each $t\in\Lambda$ is its multiplicity as a zero of $f$.
    \end{enumerate}
\end{thm}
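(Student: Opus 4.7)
The statement is a synthesis of three strands of work in the literature, so I would organize the proof around the two implications, treating the \emph{furthermore} clause as a quick addendum.

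For the direction (2)$\Rightarrow$(1), start from $p\in\mathrm{LY}_n$ and $\ell\in\R_{+}^{n}$ and consider the exponential polynomial
\[f(t)=p(\exp(2\pi it\ell))=\sum_{k\in\mathrm{supp}(p)}\hat{p}(k)\,e^{2\pi i\langle k,\ell\rangle t}.\]
Real-rootedness of $f$ is immediate from the Lee--Yang hypothesis: for $t=a+ib$ with $b\neq 0$, all the numbers $|e^{2\pi it\ell_j}|=e^{-2\pi b\ell_j}$ lie simultaneously below $1$ (if $b>0$) or simultaneously above $1$ (if $b<0$), so $p$ cannot vanish there. Hence $\Lambda$ is a real, locally finite multiset. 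To extract the FQ structure, I would write the counting measure $\mu=\sum_{t_0\in\Lambda}m(t_0)\delta_{t_0}$ as the distributional boundary value of $\frac{1}{2\pi i}\,\partial_t\log f$. Expanding $\log f$ on a horizontal strip avoiding $\Lambda$ as an absolutely convergent series in the exponentials $e^{2\pi i\langle k,\ell\rangle t}$, differentiating, and taking limits identifies the Fourier spectrum of $\mu$ with the additive subgroup of $\R$ generated by $\{\langle k,\ell\rangle:k\in\mathrm{supp}(p)\}$, with coefficients absolutely summable against any Schwartz function.

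For the harder direction (1)$\Rightarrow$(2), my plan is to reduce to the classification of one-variable real-rooted exponential polynomials and then apply the Alon--Cohen--Vinzant theorem. Concretely, the Olevskii--Ulanovskii structural result for one-dimensional FQs implies that the spectrum $S(\Lambda)$ is contained in a finitely generated additive subgroup of $\R$; pick a $\Q$-basis $\ell_1,\ldots,\ell_n$ of that group and assemble it (after appropriate sign changes) into $\ell\in\R_{+}^{n}$. The Fourier expansion of the counting measure is then supported in $\Z\ell_1+\cdots+\Z\ell_n$; antidifferentiating produces an exponential polynomial $f$ whose real zeros with multiplicity are exactly $\Lambda$ and which is a trigonometric polynomial in $(e^{2\pi i\ell_1 t},\ldots,e^{2\pi i\ell_n t})$. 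That is, $f(t)=p(\exp(2\pi it\ell))$ for some Laurent polynomial $p$. Since $f$ is real-rooted by construction, the Alon--Cohen--Vinzant classification of real-rooted exponential polynomials yields that $p$ (up to a monomial change absorbed into $\ell$ and $p$) can be taken to be Lee--Yang.

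The furthermore clause is immediate: $p$ is regular iff $\nabla p$ does not vanish on $\Sigma(p)$, which, intersected with $\R\ell$, is equivalent to $f$ having only simple real zeros, i.e.\ $\Lambda$ has no multiplicities; uniform discreteness then follows from a standard almost-periodicity estimate for a simple-rooted exponential polynomial. The principal obstacle is the (1)$\Rightarrow$(2) direction: passing from the abstract tempered-measure hypothesis to a concrete real-rooted exponential polynomial whose zeros recover $\Lambda$. This requires both the Olevskii--Ulanovskii spectral structure theorem and the Alon--Cohen--Vinzant classification, each of which is itself a substantial result; I do not see a more direct route that bypasses either.
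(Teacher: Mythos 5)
You should first note that the paper contains no proof of this theorem: it is stated as a summary of results of Kurasov--Sarnak, Olevskii--Ulanovskii and Alon--Cohen--Vinzant (with the regularity clause from Alon--Vinzant) and is used as a black box; the paper only re-imports its ingredients directly, e.g.\ \cite{olevskii2020fourier}*{Theorem 8(i)} and \cite{AlonCohenVinzant}*{Theorem 1.1} in the proof of \Cref{thm:FQimpliesalgebraic}. Your outline follows the same literature route, but as written it has genuine gaps in both directions and in the ``furthermore'' clause.

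In (1)$\Rightarrow$(2) the decisive step is hidden in the word ``antidifferentiating''. Knowing only that $S(\Lambda)$ lies in a finitely generated subgroup of $\R$ does not produce an exponential polynomial: $\widehat{\mu}$ may a priori have infinitely many atoms inside that subgroup, and antidifferentiating the counting measure yields the counting function $N(x)$, not a \emph{finite} exponential sum. Passing from the FQ axioms to a finite real-rooted exponential polynomial whose zero multiset is exactly $\Lambda$ is precisely the content of \cite{olevskii2020fourier}*{Theorem 8(i)}, which you must invoke in full (after which \cite{AlonCohenVinzant}*{Theorem 1.1} indeed gives $f(t)=p(\exp(2\pi i t\ell))$ with $p$ Lee--Yang and $\ell\in\R_+^n$); attributing to Olevskii--Ulanovskii only a ``spectrum in a finitely generated group'' statement and filling the rest by antidifferentiation is not a proof. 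In (2)$\Rightarrow$(1), identifying the spectrum with ``the additive subgroup generated by $\{\langle k,\ell\rangle\}$'' cannot be correct: once these frequencies are incommensurable (the case of interest) that subgroup is dense in $\R$, hence not locally finite, and local finiteness of $\mathrm{supp}(\widehat{\mu})$ is part of the FQ definition. What saves the day is the Lee--Yang cone structure --- the nonzero coefficients occur only at $k\in\Z_{\ge0}^n\cup\Z_{\le0}^n$, so that with $\ell\in\R_+^n$ the values $\langle k,\ell\rangle$ form a locally finite set (cf.\ \Cref{thm: moreover} and \Cref{lem: bounded}) --- and this positivity input is absent from your sketch. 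Finally, the ``furthermore'' clause is not immediate: simplicity of the zeros of $f$ does not by itself give uniform discreteness (zeros can cluster as the line $t\ell$ approaches a singular point of $\Sigma(p)$ without hitting it), and conversely a non-regular $p$ need not force a multiple zero on the particular line; this equivalence is a theorem of Alon--Vinzant and should be cited as such rather than dismissed as a standard almost-periodicity estimate.
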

To prove Theorem \ref{thm: main}, we prove the following two theorems.
\begin{thm}\label{thm: C to FQ}
    Let $\Sigma\subset\R^n$ be a periodic immersed $C^{1+\epsilon}$-hypersurface. Let $C$ be a proper cone and let $\ell\in \mathrm{int}(C^*)$ with $\Q$-linearly independent entries. Consider the measure $m_{\ell}$ on $\Sigma$, and the multiset of intersection ``times''
    \[\Lambda_{\ell}=\{t\in\R\mid t\ell\in\Sigma \},\]
    where the multiplicity of each $t\in \Lambda_\ell$ is the number of sheets of $\Sigma$ that pass through the point $t\ell\in\Sigma$.
    If $(m_{\ell},C)$ is a lighthouse measure, namely 
    \[\mathrm{supp}(\widehat{m}_{\ell})\subset \Z^n\cap\left(C\cup-C\right), \]
    then
    \begin{enumerate}
        \item $\Sigma$ is orientable, it has an orientation such that the normal vector field takes its values in $C\cap S^{n-1}$. 
        \item The following holds for any $f$ Schwartz function on $\R$,
       \begin{equation}\label{eq: summation formula}\sum_{t\in\Lambda_{\ell}}\hat{f}(t)=\sum_{k\in\Z^n}\widehat{m}_{\ell}(k){f}(\langle k,\ell\rangle),
       \end{equation} 
       where the support of the right-hand-side
       $$S(\Lambda)=\{\langle k,\ell \rangle\mid k\in\mathrm{supp}(\widehat{m}_{\ell})\},$$
       is discrete. The multiset $\Lambda_{\ell}$
is a one-dimensional FQ. 
    \end{enumerate}
\end{thm}
\begin{rem}
    The concept of normal vector field is well defined for immersed manifolds, even when the immersion has crossings that result in singularities. If $\Sigma$ is the image of an immersion $\iota\colon M\to\R^n/\Z^n$, by saying that $\Sigma$ has an orientation and normal vector field $\nfield$, we mean that there exists a continuous function $\nfield\colon M\to S^{n-1}$ such that $\nfield(x)$ is in the left-kernel of the differential $D\iota(x)\in\R^{n\times(n-1)}$ for all $x\in M$. Part (1) of Theorem \ref{thm: C to FQ} says that there exists a normal vector field $\nfield$ with the property that $\nfield(x)\in S^{n-1}\cap C$ for all $x\in M$.
\end{rem}
\begin{thm}\label{thm: FQ to p}
    Let $\Sigma\subset\R^n$ be a periodic immersed $C^{1+\epsilon}$-hypersurface. Assume that there exists $\ell\in\R^n$ with $\Q$-linearly independent entries, so that the multiset
\[\Lambda_{\ell}=\{t\in\R\mid t\ell\in\Sigma \}\]
is a one-dimensional FQ with spectrum $S(\Lambda_{\ell})\subset\{\langle\ell,k\rangle\mid k\in\Z^n\}$. Then there exists an essentially Lee--Yang polynomial $p$ such that 
\[\Sigma=\Sigma(p)=\{x\in\R^n\mid p(e^{2\pi i x_{1}},\ldots,e^{2\pi i x_{n}})=0 \}.\] 
If $\Sigma$ is a manifold, then $p$ is regular.
\end{thm}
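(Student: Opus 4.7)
The plan is to apply \Cref{thm:1dFQ} to $\Lambda_\ell$ to extract a Lee--Yang polynomial, build an essentially Lee--Yang polynomial $p$ by a monomial change of variables, verify the line-by-line identity $\Sigma\cap(z+\R\ell)=\Sigma(p)\cap(z+\R\ell)$ for every $z\in\Z^n$, and finally promote it to the global identity $\Sigma=\Sigma(p)$ via a density argument on the family of lines $\{z+\R\ell\}_{z\in\Z^n}$ in $\R^n$.

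For the construction of $p$, \Cref{thm:1dFQ} supplies $m\in\N$, $q\in\mathrm{LY}_m$ and $\alpha\in\R_+^m$ with $\Lambda_\ell=\{t\in\R : q(\exp(2\pi it\alpha))=0\}$. Choosing this representation with $m$ minimal, one may arrange that the entries of $\alpha$ are $\Q$-linearly independent and that $\sum_{j=1}^m\Z\alpha_j$ equals the $\Z$-module generated by $S(\Lambda_\ell)$; any $\Q$-linear relation among the $\alpha_j$ can be used to rewrite $q$ in fewer variables without losing Lee--Yang. The spectrum hypothesis $S(\Lambda_\ell)\subset\sum_i\Z\ell_i$ then forces $\alpha=A_0\ell$ for some $A_0\in\Z^{m\times n}$ of rank $m\le n$. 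I would extend $A_0$ to an invertible $A\in\Z^{n\times n}$ by appending $n-m$ suitable standard basis rows, lift $q$ to $\tilde q(z_1,\dots,z_n):=q(z_1,\dots,z_m)\in\mathrm{LY}_n$ (adding dummy variables preserves Lee--Yang), and set $p(z):=\tilde q(z^A)$. This $p$ is essentially Lee--Yang: with $d:=\det A$ and $A^*:=\operatorname{adj}(A)\in\Z^{n\times n}$, one computes $p(z^{A^*})=\tilde q(z^{dI})=\tilde q(z_1^d,\dots,z_n^d)$, which is Lee--Yang since dilating each variable by a non-zero integer power preserves the polydisc non-vanishing conditions. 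A direct computation yields $p(\exp(2\pi ix))=\tilde q(\exp(2\pi iAx))=q(\exp(2\pi iA_0x))$, and on $x=t\ell$ this becomes $q(\exp(2\pi it\alpha))$, whose zero set is $\Lambda_\ell$. Hence $\Sigma\cap\R\ell=\Sigma(p)\cap\R\ell$, and by $\Z^n$-periodicity of both sides, $\Sigma\cap(z+\R\ell)=\Sigma(p)\cap(z+\R\ell)$ for every $z\in\Z^n$.

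The main obstacle is to upgrade this line-by-line identity to the global equality $\Sigma=\Sigma(p)$. The key input is density of $\Z^n+\R\ell$ in $\R^n$: the $\Q$-linear independence of $\ell$ gives $\Z^n\cap\ell^\perp=\{0\}$, and by Pontryagin duality $\pi_{\ell^\perp}(\Z^n)$ is dense in $\ell^\perp$. To prove $\Sigma\subset\Sigma(p)$, I would pick $x_0\in\Sigma$ and a small neighborhood $U\ni x_0$ on which $\Sigma$ is a $C^1$ graph over some hyperplane; combining density of $\pi_{\ell^\perp}(\Z^n)$ with this local graph structure yields, for every $\epsilon>0$, some $z\in\Z^n$ and $t\in\R$ with $z+t\ell\in\Sigma\cap B_\epsilon(x_0)$. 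By the line-by-line equality this point also lies in $\Sigma(p)\cap B_\epsilon(x_0)$, and since $\Sigma(p)$ is closed one concludes $x_0\in\Sigma(p)$. The delicate sub-step is the case where $\ell$ is tangent to $\Sigma$ at $x_0$: here the FQ hypothesis rules out $\Sigma$ containing any segment parallel to $\ell$ whose $\Z^n$-translate lies in $\R\ell$ (otherwise $\Lambda_\ell$ would fail to be locally finite), and a careful analysis of the local graph over $T_{x_0}\Sigma$ combined with the dense orbit of $\Z^n$ still produces the required intersections accumulating at $x_0$. The reverse inclusion $\Sigma(p)\subset\Sigma$ follows by the same argument applied at points of the regular locus of the analytic variety $\Sigma(p)$, which is dense in $\Sigma(p)$, together with closedness of $\Sigma$.
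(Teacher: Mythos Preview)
Your construction of $p$ hinges on the assertion that, after choosing the representation $(q,\alpha)$ with minimal $m$, one may arrange $\sum_j\Z\alpha_j=\langle S(\Lambda_\ell)\rangle_\Z$ and hence $\alpha=A_0\ell$ with \emph{integral} $A_0$. This is the crux, and it is not justified. From the Olevskii--Ulanovskii construction the frequencies $\lambda_j$ of the exponential polynomial lie in $\langle S(\Lambda_\ell)\rangle_\Z\subset\sum_i\Z\ell_i$, but the passage from the exponential polynomial to a Lee--Yang pair $(q,\alpha)$ via \cite{AlonCohenVinzant} in general only yields $\mathrm{span}_\Q\{\alpha_1,\dots,\alpha_m\}=\mathrm{span}_\Q\{\lambda_1,\dots,\lambda_N\}$; the $\Z$-module $\sum_j\Z\alpha_j$ can be a proper refinement of $\langle\lambda_j\rangle_\Z$, so $A_0$ is a priori only rational. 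With rational $A_0$ your formula $p(\exp(2\pi ix))=q(\exp(2\pi iA_0x))$ is no longer a Laurent polynomial in $\exp(2\pi ix)$, and the line-by-line identity collapses. The paper confronts exactly this mismatch: it sets $M_1=\sum_i\Z\ell_i$, $M_2=\sum_j\Z\alpha_j$, and works in the rank-$n$ lattice $M=M_1+M_2$. Dualizing the inclusions gives maps $\beta\colon\T^n\to\T^n$ and $\gamma\colon\T^n\to\T^m$; the pullback $\gamma^{-1}(\{q=0\})$ is essentially Lee--Yang by \Cref{lem:pullbacklyisely}, and the genuinely new ingredient is \Cref{lem:imageofhyperhypersurfaceishyperhypersurface}, which shows that the \emph{image} of an essentially Lee--Yang zero set under $\beta$ (a finite isogeny of tori) is again the zero set of an essentially Lee--Yang polynomial. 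That image is the desired $\Sigma(p)/\Z^n$.

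Your density argument for the global equality also has a soft spot in the tangent case. For a merely $C^1$ hypersurface the set of points where $\ell$ is tangent can have non-empty interior in $\Sigma$, and the sketch ``a careful analysis of the local graph\dots still produces the required intersections'' does not close this. The paper sidesteps the issue: once $\Sigma(p)\subset\Sigma$ is established (from $\overline{\psi_\ell(\Lambda)}\subset X$), \Cref{lem:hypersurfaceinmanifold} shows $\Sigma(p)$ is a union of connected components of $\Sigma$, and \Cref{lem:linedense1} guarantees the line $\R\ell$ meets every component of $\Sigma/\Z^n$, forcing $\Sigma(p)=\Sigma$ without any pointwise density at tangent points.
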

With Theorem \ref{thm: C to FQ} and Theorem \ref{thm: FQ to p}, this proves Theorem \ref{thm: main}, and also provides a new geometric characterization of one-dimensional FQs:
\begin{cor}
   A uniformly-discrete set $\Lambda\subset\R$ is a one-dimensional FQ (no multiplicities) if and only if there exists
   \begin{enumerate}
       \item a periodic $C^{1+\epsilon}$-hypersurface $\Sigma\subset\R^n$,
       \item a proper cone $C$,
       \item a direction $\ell\in\mathrm{int}(C^{*})$ with $\Q$-linearly independent entries,
   \end{enumerate} 
   such that $(m_{\ell},C)$ is a lighthouse, and $\Lambda$ is the set of intersection times $$\Lambda=\{t\in\R\mid t\ell\in\Sigma\}.$$ 
\end{cor}

\subsection{Summary and structure of the paper}
The central theme of this work is that the lighthouse property for measures imposes rigid geometric structure on the support of the measure. 
We begin by showing that, under mild assumptions, the support of a lighthouse 
measure must be a hypersurface, and that its normal directions are constrained 
to lie inside the lighthouse cone (Theorem~\ref{thm: dimension}). This provides 
the necessary transversality to study intersections with lines.

We explain how the torus zero sets of essentially Lee-Yang polynomials give rise to lighthouse measures. 

The main theorem is the inverse: a hypersurface supporting certain lighthouse measures, up to mild assumptions, must be the torus zero set of an essentially Lee-Yang polynomial. 

\medskip

The paper is organized as follows. In Section~\ref{sec:three} we relate Fourier support to 
normal directions and prove Theorem~\ref{thm: dimension}. In Section~\ref{sec: FQ} we establish 
the connection to one-dimensional Fourier quasicrystals. In Section~\ref{sec:propleeyang} we provide properties of essentially Lee--Yang polynomials, showing the 
equivalent characterizations. In Section~\ref{sec: moreover} we prove \Cref{thm: moreover}, 
showing that essentially Lee--Yang polynomials give rise to lighthouse measures. 
Finally, in Section~\ref{sec: proof of FQ to p} we complete the proof of the inverse theorem, \Cref{thm: main}, using the 
classification of one-dimensional Fourier quasicrystals.

\subsection*{Acknowledgements}
The authors  thank Peter Sarnak, Pavel Kurasov, Wayne Lawton, David Jerison, and Alexander Logunov for insightful discussions. 
The first author was supported by the Department of Mathematics at MIT, and by the Simons Foundation Grant 601948, DJ. The second author was partially supported by DFG grant 502861109.

\section{Proof of Theorem~\ref{thm: dimension}\label{sec:three}, and the relation between normal vectors and the Fourier support of a measure}
In this subsection, we establish a relation between the Fourier support of a measure and the normal directions to its physical support; see Proposition~\ref{prop:normal}. Results closely related to Proposition~\ref{prop:normal} were achieved in the context of wave front sets, see e.g.~\cite{hormander}*{Chapter VIII}, but we did not find its precise statement in the literature.
We then apply this relation to prove Theorem~\ref{thm: dimension}, which asserts that, under mild assumptions, the support of a lighthouse measure has codimension one, it is orientable, and the normal vectors always points into the direction of the lighthouse cone. This will also provide the necessary transversality with respect to directions in $\mathrm{int}(C^{*})$ required in the sequel.
 
\begin{prop}\label{prop:normal}
	Let $m$ be a positive Radon measure on $\R^n$ which is $\Z^n$-periodic. Let $B\subset\R^n$ be an open set and define $$\Sigma=\mathrm{supp}(m)
    \cap B.$$
    If $\Sigma$ is a $C^{1+\epsilon}$-submanifold of $\R^n$, and $m$, when restricted to $B$, has the form
    \[dm|_{B}=\rho d\sigma,\]
    with $\sigma$ the surface measure on $\Sigma$ (induced by the Lebesgue measure on $\R^n$), and $\rho>0$ continuous, then for any $x\in\Sigma$ and any non-zero normal vector $\xi\perp T_{x}\Sigma$,  
\[\left|\{k\in\Z^n\mid \widehat{m}(k)\ne0,\quad \frac{\langle k,\xi\rangle}{\|k\|\|\xi\|}>1-\tau\}\right|=\infty,\quad\text{for all  $\ \tau>0$}.\]
Namely, for any non-zero angle $\theta>0$, the infinite cone $C=C_{\xi,\theta}$ of angle $\theta$ around the direction $\xi$ has
\[ \left|\mathrm{supp}(\widehat{m})\cap C\right|=\infty.\]
\end{prop}	
The proof relies on the fact that for any choice of $N\in\N$ and $\xi$, there is a Gaussian function $g_{N,\xi}$ that is concentrated in physical space in a ball of radius $\frac{\log N}{N}$ around $x_0$, while in Fourier space, it is concentrates in a ball of radius $N\log N$ around $\xi$. The explicit concentration bound we will use is stated below. As this is a standard calculation, we add the proof in the appendix for the interested reader. 
\begin{lem}[Physical and Fourier localization of Gaussians]\label{lem:Gaussian}
	Given the setting and assumptions of Proposition \ref{prop:normal}, we further assume (without loss of generality) that $0\in\Sigma$. For any $N\in \N$ define
    \[C_{N}=\int_{\R^n}e^{-\frac{N^2\|x\|^2}{2}}dm(x).\]
Then, there exists $N_{0}$, such that for any $N>N_{0}$ and any $\xi\in\R^n$,
\begin{align}\label{gaussian 1}
	\frac{1}{C_{N}}\int_{\R^n\setminus B(0,\frac{\log N}{N})}e^{-\frac{N^2\|x\|^2}{2}}dm(x) & \le\ 0.1\\
		\label{gaussian 3}
	\frac{(2\pi)^{n/2}}{C_{N}N^{n}}\sum_{k\in\Z^n\setminus B(\xi,N\log N)}e^{-\frac{\|k-\xi\|^2}{2N^2}} & \le\ 0.1
\end{align}	
\end{lem}
We use the lemma to prove Proposition \ref{prop:normal}.
\begin{proof}[Proof of Proposition \ref{prop:normal}] To ease the notation, we take a smaller $\epsilon>0$, so that $\Sigma$ is $C^{1+2\epsilon}$. Namely, locally, up to rotation, $\Sigma$ is the graph of a function whose derivative is $2\epsilon$-Hölder.
We may normalize $m$ so that $\widehat{m}(0)=\int_{\R^n/\Z^n}dm=1$. We may also assume that $0\in\Sigma$ for simplicity. Fix a non-zero $\xi\perp T_{0}\Sigma$, and for every $N\in \N$ define
\begin{align}
    \xi_{N} & := \ N^{1+\epsilon}\xi
\end{align}
We break the proof of the proposition into two parts.

\textbf{First part:} We claim that there exists a constant $N_{1}$, such that for all $N>N_1$
    \begin{equation}\label{phase estimate}
        \cos(2\pi \langle\xi_{N},x\rangle )\ge 0.9, \quad\text{for all }\  x\in\Sigma\cap B(0,\frac{\log N}{N}).
    \end{equation}
    Let $c=\dim(\Sigma)$ and $d=n-c$.
Since this is a local statement, up to rotation, we may assume that $T_{0}\Sigma=\R^{c}\times\{0\}$ in the $\R^n=\R^{c}\times\R^d$ decomposition. Write $\xi=(0,\omega)$. For large enough $N$, $\Sigma\cap B(0,\frac{\log N}{N})$ {agrees locally with} the graph of a differentiable function $\R^c\colon U\to\R^d$, with $v(0)=0$ and $Dv(0)=0$. In particular, \[\Sigma\cap B(0,\frac{\log N}{N})\subset\{(y,v(y))\mid \|y\|\le \frac{\log N}{N} \},\]
By the assumption, $y\mapsto Dv(y)$ is $2\epsilon$-Hölder continuous, so there exists a constant $C>0$ such that $\|v(y)\|<C \|y\|^{1+2\epsilon}$ for all $y\in\R^c$ {with $\|y\|\le \frac{\log N}{N}$}. Using that $\xi=(0,\omega)$ and $x=(y,v(y))$, we get $\langle\xi,x\rangle=\langle\omega,v(y)\rangle$, so we can bound $$|\langle\xi,x\rangle|\le\|\omega\|\|v(y)\|< C\|\omega\|\|y\|^{1+2\epsilon}. $$  
    In particular,      $\left|\langle\xi_{N},x\rangle \right|\le C\|\omega\|\frac{(\log N)^{1+2\epsilon}}{N^{\epsilon}} \to 0$,
    uniformly over all $x\in \Sigma\cap B(0,\frac{\log N}{N})$. As a result, $\cos(2\pi \langle\xi_{N},x\rangle)\to 1,$
    uniformly over all $x\in \Sigma\cap B(0,\frac{\log N}{N})$, and \eqref{phase estimate} follows.

    \textbf{Second part:} To prove Proposition \ref{prop:normal} it is enough to prove the following claim:
    There exists $N_{1}$ such that for $N>N_{1}$, there exists a point $$k_{N}\in \Z^n\cap B(\xi_{N},N\log N)$$ such that  $\widehat{m}(k_{N})\ne 0$. 
    
    To see why Proposition \ref{prop:normal} follows, notice that on the one hand $\|k_{N}\|\to \infty$, and on the other hand, the angle between such $k_{N}$ and $\xi$ is at most $\theta_{N}={\arcsin}\frac{N\log N}{\|\xi_{N}\|}$, which goes to zero since $\frac{N\log N}{\|\xi_{N}\|}=\frac{N\log N}{N^{1+\epsilon}\|\xi\|}\to 0$. This means that for any $\theta>0$, the infinite cone of angle $\theta$ around $\xi$ will contain every such $k_{N}$ for which $\theta_{N}<\theta$. This will give infinitely many points in the intersection of the cone and the support of $\widehat{m}$.

To prove this claim, and thus prove the proposition, we will upper and lower bound the integral
\begin{align*}
    I_N & :=\frac{1}{C_{N}}\int_{\R^n}e^{-\frac{N^2\|x\|^2}{2}}e^{-2\pi i \langle\xi_{N},x\rangle}dm(x)\\
    & =\frac{(2\pi)^{n/2}}{C_{N}N^{n}}\sum_{k\in\Z^n}e^{-\frac{\|k-\xi_{N}\|^2}{2N^2}}\widehat{m}(k),
\end{align*}
where the last equality is using the definition of $\widehat{m}$, and the Fourier transform of a Gaussian. Now, we can lower bound $|I_{N}|$ 
    	\begin{align*}
		|I_{N}| &  
        \ge \frac{1}{C_{N}}\left|\int_{\R^n\cap B(0,\frac{\log N}{N})}e^{-\frac{N^2\|x\|^2}{2}}e^{2\pi i \langle\xi_{N},x\rangle}dm(x)\right|\\&-\frac{1}{C_{N}}\left|\int_{\R^n\setminus B(0,\frac{\log N}{N})}e^{-\frac{N^2\|x\|^2}{2}}e^{2\pi i \langle\xi_{N},x\rangle}dm(x)\right|  \\
 &       \ge \frac{1}{C_{N}}\int_{\Sigma\cap B(0,\frac{\log N}{N})}e^{-\frac{N^2\|x\|^2}{2}}\cos(2\pi \langle\xi_{N},x\rangle)dm(x)\\ 
 &-\frac{1}{C_{N}}\int_{\R^n\setminus B(0,\frac{\log N}{N})}e^{-\frac{N^2\|x\|^2}{2}}dm(x)  \\
 &\ge \frac{0.9}{C_{N}}\int_{\Sigma\cap B(0,\frac{\log N}{N})}e^{-\frac{N^2\|x\|^2}{2}}dm(x)-0.1   \\
 &\ge (0.9)^2 -0.1 \ge 0.7  
	\end{align*}
    Where the first inequality is simply $|a+b|\ge|a|-|b|$. The second inequality is $|a|\geq\Rea(a)$, and $|\int gdm|\le \int |g|dm$ since $m$ is positive. The third inequality is substituting \eqref{gaussian 1} and \eqref{phase estimate}. The last inequality is \eqref{gaussian 1} again.

On the other hand, $|\widehat{m}(k)|\le \widehat{m}(0)=1$ for all $k$, since $m$ is positive, so we can upper bound $|I_N|$.
\begin{align*}
    |I_{N}| & 
    \le\frac{(2\pi)^{n/2}}{C_{N}N^{n}}\sum_{k\in\Z^n}\left|e^{-\frac{\|k-\xi_{N}\|^2}{2N^2}}\widehat{m}(k)\right|\\
    &
    \le\frac{(2\pi)^{n/2}}{C_{N}N^{n}}\sum_{k\in\Z^n}e^{-\frac{\|k-\xi_{N}\|^2}{2N^2}}\\
    &\le  0.1+\frac{(2\pi)^{n/2}}{C_{N}N^{n}}\sum_{k\in\mathrm{supp}(\widehat{m})\cap B(\xi_{N},N\log N)}e^{-\frac{\|k-\xi_{N}\|^2}{2N^2}},
\end{align*}
 where we used \eqref{gaussian 3} in the last inequality. Comparing the upper bound on $|I_{N}|$ to the lower bound $|I_{N}|\ge 0.7$, we conclude that
\[\sum_{k\in\mathrm{supp}(\widehat{m})\cap B(\xi_N,N\log N)}e^{-\frac{\|k-\xi_{N}\|^2}{2N^2}}\ge 0.6\frac{C_{N}N^{n}}{(2\pi)^{n/2}}>0,\]
which means, in particular, that $\mathrm{supp}(\widehat{m})\cap B(N^{1+\epsilon}\xi,N\log N)\ne\emptyset$. As discussed above, this concludes the proof of the proposition.
\end{proof}
We may now prove Theorem \ref{thm: dimension}.
\begin{proof}[Proof of Theorem \ref{thm: dimension}]
Let $(\mu,C)$ be a $\Z^n$-periodic lighthouse measure in $\R^n$, let $M\subset\R^n$ be a $c$-dimensional $C^{1+\epsilon}$ submanifold with $c$-dimensional volume measure $\sigma$. Let $x_{0}\in\mathrm{supp}(\mu)\cap M$, and assume that for some ball $B$ around $x_{0}$, 
\[d\mu|_{B}=\rho d\sigma|_{B},\]
with a continuous and positive density function $\rho$. 
We first prove the following:\\
\textbf{Claim:} any non-zero $\xi\in \R^n$ with $\xi\perp T_{x_0}M$ must satisfy $\xi\in C\cup -C$. 

To see why, assume that there exists non-zero $\xi\in \R^n$ with $\xi\perp T_{x_0}M$ but such that $\xi\notin (C\cup-C)$, and consider the line $V=\mathrm{span}_{\R}(\xi)$. Since $C$ is a cone, this implies $V\cap (C\cup-C)=\{0\}$. Using the fact that both $V$ and $C\cup-C$ are closed and scaling invariant, there is a minimal angle $\theta$ between $V$ and $C$
\[\theta=\min_{v\in V\setminus\{0\},w\in C\setminus\{0\}}\arccos^{-1}\left(\frac{|\langle v,w\rangle |}{\|v\|\|w\|}\right)>0.\]
On the other hand, Proposition \ref{prop:normal} guarantees that $\widehat{\mu}(k)\ne0$ for some $k\in\Z^n\setminus\{0\}$ in the $\theta/2$ cone around $\xi$. This means that $k\notin C$ and $\widehat{\mu}(k)\ne0$, which is a contradiction to the lighthouse property. Hence, we proved the claim.

Now we prove parts (1) and (2) of the theorem:\\
\textbf{Part (1):} To prove that $c=n-1$, let us assume that $c\ne n-1$ to obtain a contradiction.

By the definition of lighthouse, $\mathrm{Vol}_{n}(M\cap B)=0$, so $c\le n-1$, and since we assume $c\ne n-1$, we must have $c\le n-2$.

By the lighthouse property, the cone $C$ is proper, so there is an $n-1$ dimensional vector subspace $H\subset\R^n$ such that $H\cap (C\cup -C)=\{0\}$. The tangent space $T_{x_0}(M)$ is $c$-dimensional, hence its orthogonal complement $(T_{x_0}(M))^{\perp}$ is $n-c\ge 2$ dimensional, so it must intersect any hyperplane, such as $H$, non-trivially. In particular, there is a non-zero $\xi \in H\cap (T_{x_0}(M))^{\perp}$. However, by the claim proved above, $\xi\in (C\cup -C)$ and $H\cap (C\cup -C)=\{0\}$, giving the needed contradiction.

\textbf{Part (2):} Part (1) and the claim proved above guarantee that $M$ is a manifold of codimension $1$, and for any $x\in M\cap B$, letting $V_x=(T_{x}M)^{\perp}$, we know that $V_{x}\subset C\cup-C$. Let $\nfield(x)=\xi$ for the unique point $\xi\in V_{x}\cap C$ with $\|\xi\|=1$. The map $x\mapsto \nfield(x)$ is locally defined and continuous because $M$ is differentiable, so the only obstruction to defining it globally on $M\cap B$ is if there is a sequence $x_j\in M\cap B$ with $x_{j}\to x$ but $\nfield(x_j)\to -\nfield(x)$. However, the sequence $\nfield(x_j)$ lies in the closed cap $S^{n-1}\cap C$, while $-\nfield(x)$ lies in the reciprocal closed cap $S^{n-1}\cap -C$, and these are disjoint caps as $C$ is proper. Thus, this cannot happen, so there is no obstruction, and hence $M\cap B$ is orientable with orientation such that $\nfield(x)\in C$ for all $x$.
\end{proof}

\section{One-dimensional FQs and a proof of Theorem \ref{thm: C to FQ}}\label{sec: FQ}
Kurasov and Sarnak \cite{kurasovsarnak} showed that there exist one-dimensional FQs that are uniformly discrete and non-periodic, answering several open questions in the field. To do so, Kurasov and Sarnak provided a general construction of one-dimensional FQs, proving $(2)\Rightarrow(1)$ in Theorem \ref{thm:1dFQ}. Olevskii and Ulanovskii proved that every one-dimensional FQ is the zero set of a real-rooted trigonometric function \cite{olevskii2020fourier}. Cohen, Vinzant, and the first author showed that, up to a non-vanishing factor $e^{i\lambda t}$, any real rooted trigonometric function has the form $f(t)=p(\exp(2\pi it\ell))$ for some $n\in\N$, Lee--Yang polynomial $p\in\mathrm{LY}_{n}$ and positive vector $\ell\in\R^{n}_{+}$ with $\Q$-linearly independent entries. This concludes $(1)\Rightarrow(2)$ in Theorem \ref{thm:1dFQ}. 
Surprisingly, one-dimensional FQs and their characterization turned out to be intimately related to lighthouse measures on hypersurfaces. This relation is manifested in \Cref{thm: C to FQ}, which provides a general mechanism for creating summation formulas from intersection points of an irrational line with a periodic hypersurface.

This section is a simplified version of the proof of the summation formula in \cite{alon2025higher} for the case of $d=1$, but extended to immersed manifolds. For the next lemma, recall that for a proper cone $C\subset \R^n$ the interior of its dual cone is 
\[\mathrm{int}(C^*)=\{\ell\in\R^{n}\mid \langle \ell,x\rangle >0\ \text{  for all non-zero  $x\in C$}\}.\]
\begin{lem}\label{lem: bounded}
    Let $C$ be a proper cone and let $\ell\in\mathrm{int}(C^*)$. Then the set 
    $$S=\{\langle\ell,k\rangle\mid k\in\Z^n\cap(C\cup-C)\}$$ is locally finite and 
    $\sum_{k\in\Z^n\cap(C\cup-C)\}}|f(\langle\ell,k\rangle)|<\infty$ for any Schwartz functions $f$.
\end{lem}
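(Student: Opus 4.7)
The plan is to reduce both claims to a single linear lower bound: there exists $c>0$ with $|\langle\ell,v\rangle|\ge c\|v\|$ for every $v\in C\cup(-C)$. The source of $c$ is the hypothesis $\ell\in\mathrm{int}(C^*)$ together with compactness. With this in hand, both conclusions become elementary counting arguments.

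For the lower bound, I would observe that $C$ is closed and $\ell\in\mathrm{int}(C^*)$, so the continuous function $v\mapsto\langle\ell,v\rangle$ is strictly positive on the compact cross-section $C\cap S^{n-1}$ and thus bounded below there by some $c>0$. Homogeneity extends the inequality to $\langle\ell,v\rangle\ge c\|v\|$ for all $v\in C$, and the analogous estimate $|\langle\ell,v\rangle|\ge c\|v\|$ holds on $-C$ by symmetry.

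For local finiteness of $S$, fix $M>0$. If $\langle\ell,k\rangle\in[-M,M]$ with $k\in\Z^n\cap(C\cup(-C))$, the lower bound forces $\|k\|\le M/c$; only finitely many lattice points satisfy this, so $S\cap[-M,M]$ is finite. For absolute summability, every Schwartz $f$ obeys $|f(t)|\le A(1+|t|)^{-(n+1)}$ for some $A=A(f)$; combining this with the lower bound yields
\[
\sum_{k\in\Z^n\cap(C\cup(-C))}|f(\langle\ell,k\rangle)|\;\le\; A\sum_{k\in\Z^n}\frac{1}{(1+c\|k\|)^{n+1}}\;<\;\infty,
\]
as the right-hand sum is a standard convergent lattice sum.

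As stated the sum in the lemma ranges over all of $\Z^n$, but only the contribution of $k\in\Z^n\cap(C\cup(-C))$ is meaningful: the $\Q$-linear independence of the entries of $\ell$ makes $\{\langle\ell,k\rangle\}_{k\in\Z^n}$ dense in $\R$, so an unrestricted sum cannot converge for a generic Schwartz function. I anticipate no real obstacle in the argument; the only step that needs any care is extracting the constant $c$ from the compactness of $C\cap S^{n-1}$ together with the interior-of-the-dual-cone condition.
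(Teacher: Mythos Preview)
Your proposal is correct and follows essentially the same approach as the paper: both arguments amount to showing that $\{k\in\Z^n\cap(C\cup-C):|\langle\ell,k\rangle|\le R\}$ has cardinality $O(R^n)$, from which both conclusions follow. Your route via the compactness of $C\cap S^{n-1}$ to extract the constant $c$ is slightly more direct than the paper's, which first passes to the convex hull of $C$ and then argues by contradiction that the truncated cone $C_R=\{x\in C:\langle\ell,x\rangle\le R\}$ is bounded, using volume scaling for the count; your observation about the sum needing to be restricted to $\Z^n\cap(C\cup-C)$ is also correct and matches how the lemma is actually applied later in the paper.
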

\begin{proof}
Consider the set $$S_{R}=\{\langle\ell,k\rangle\mid k\in\Z^n\cap(C\cup-C)\textrm{ and }|\langle\ell,k\rangle|<R\},$$ and let $C_{R}=\{x\in C\mid \langle \ell,x\rangle\le R\}$. Since $C$ is closed and convex cone, $C_{R}$ is also convex and $0\in C_R$. We claim that $C_{R}$ is bounded. To see why, assume it is not bounded to get a contradiction. Then $C_R$ contains an infinite ray, i.e.~there exists $x\in C_{R}$ such that $tx\in C_{R}\setminus\{0\}$ for all $t>0$. However $\langle tx,\ell\rangle\le R$ for all $t>0$ implies that $\langle x,\ell\rangle=0$ in contradiction to $\ell\in\mathrm{int}(C^*)$. So $C_{R}$ is bounded. Since $C_{R}$ scales linearly with $R$, it follows that $\mathrm{vol}(C_{R})=O(R^n)$ and we can conclude that $|C_{R}\cap\Z^n|=O(R^n)$. This implies both that $S_R$ is locally finite and also finiteness of the infinite sum.
\end{proof}
With this, we can finish the proof of Theorem \ref{thm: C to FQ}. As a reminder, the setting of the theorem is that $\Sigma\subset\R^n$ is a $\Z^n$-periodic $C^{1+\epsilon}$-immersed hypersurface, $C\subset \R^n$ a proper cone, $\ell\in\mathrm{int}(C^{*})$ with $\Q$-linearly independent entries, and we assume that the measure $m_{\ell}$ on $\Sigma$ is a lighthouse with cone $C$. Let us further denote the $n-1$ dimensional manifold by $M$ and the $C^{1+\epsilon}$-immersion whose image is $\Sigma$ by $\iota\colon M\to\R^n$. Theorem \ref{thm: C to FQ} states that:
\begin{enumerate}
    \item There is a (Hölder) continuous choice of a normal vector field $\nfield:M\to S^{n-1} $ whose image lies in $S^{n-1}\cap C$.
    \item The multiset $\Lambda=\{t\in\R\mid t\ell\in\Sigma\}$ is a one dimensional FQ. For any Schwartz function $f$ on $\R$,
    \[\sum_{t\in\Lambda}\hat{f}(t)=\sum_{k\in\Z^n}\widehat{m}_{\ell}(k){f}(\langle k,\ell\rangle).\]  
\end{enumerate}
\begin{proof}[Proof of Theorem \ref{thm: C to FQ}]
We work in the setting above. 
Let $\Sigma_{\mathrm{reg}}$ be the regular set of $\Sigma$, i.e.~the points at which $\Sigma$ is locally a manifold, and $\Sigma_{\mathrm{sing}}=\Sigma\setminus\Sigma_{\mathrm{reg}}$ be the singular part. Notice that $\iota^{-1}(\Sigma_{\mathrm{reg}})$ is open and dense in $M$. 
Also, by definition, the differential $D\iota(y)$ at $y\in M$ has a one dimensional left kernel, and depends continuously on $y$, so the normal vector field is locally defined up to a choice of sign. In particular, $y\mapsto |\langle \nfield(y),\ell\rangle|$ is a well defined continuous map. Now, consider $y\in M$ with  $x=\iota(y)\in\Sigma_{\mathrm{reg}}$, so $dm_{\ell}=\rho d\sigma$ around $x$ with  $\rho(x)=|\langle\nfield(y),\ell\rangle|$, and $d\sigma$ encodes the multiplicity in it.\footnote{Notice that a regular point  $x\in\Sigma_{\mathrm{reg}}$ can have several preimage points $y$ with $x=\iota(y)$, if the normal vector field agree on the neighborhoods of all preimages.} 
Theorem \ref{thm: dimension} applied to $m_{\ell}$ says that either:\\
(a) $|\langle\nfield(y),\ell\rangle|=0$, or\\
(b) $\nfield$ is defined around $y$ and takes values in $C\cap S^{n-1}$, so $|\langle\nfield(y),\ell\rangle|\ge \tau$.

By continuity of $|\langle\nfield(y),\ell\rangle|$, we conclude that $M$ is the disjoint union of two open-and-closed submanifolds $M_a$ and $M_{b}$. $M_a$ is the set of points $y$ for which $(a)$ holds: $|\langle\nfield(y),\ell\rangle|=0$.  $M_b$ is the set of points $y$ for which $(b)$ holds. In particular, on $M_{b}$, the condition $|\langle\nfield(y),\ell\rangle|\ge \tau$ holds, which means that the locally defined $\nfield$, with sign such that $\langle\nfield(y),\ell\rangle\ge \tau$, extends to globally defined $\nfield$, that takes values in $C\cap S^{n-1}$. 

We claim that $M_a$ is empty. If there was a point $y\in M_a$, then the immersion of $M_a$ must contain the entire line $V=\{\iota(y)+t\ell\mid t\in\R\}$ since $\ell\in T_{x}\Sigma$ for all regular points in the immersion of $M_a$. In particular, $V\subset \Sigma$. However, $\ell$ has $\Q$-independent entries, which means that $V/\Z^n$ is dense, while $\Sigma/\Z^n$ is a codimension one closed set. We conclude that $M_a$ must be empty, and therefore $M=M_b$ has a well defined normal vector field taking values only in $C$. This proves part (1).

For the second part, notice that now $dm_{\ell}(x)=\langle\nfield(x),\ell\rangle d\sigma$ is positive everywhere (no need for absolute value). By assumption, it further satisfies $\mathrm{supp}(\widehat{m}_{\ell})\subset \Z^n\cap\left(C\cup-C\right)$, and by Lemma \ref{lem: bounded}, the set $\{\langle\ell,k\rangle\mid k\in\mathrm{supp}(\widehat{m}_{\ell})\}$ is discrete, and 
\[\sum_{k\in\mathrm{supp}(\widehat{m}_{\ell})}|\widehat{m}_{\ell}(k)f(\langle\ell,k\rangle)|<\infty,\]
for any Schwartz function $f$ on $\R$. To show that the multiset $\Lambda$ is a one-dimensional FQ we are left with showing that its counting measure is tempered and proving the summation formula \eqref{eq: summation formula}.

To prove the summation formula \eqref{eq: summation formula}, recall that the multiplicity of a point $x\in\Sigma$ is $|\iota^{-1}(x)|$, and it is bounded by some $C'>0$ due to compactness of $\Sigma/\Z^n$. 
Let us denote the multiplicity of $x$ by $\deg_{\Sigma}(x)$ and for a point in $\Lambda=\{t\in\R\mid t\ell\in\Sigma\}$ we can define the multiplicity $\deg_{\Lambda}(t):=\deg_{\Sigma}(t\ell)$. We note that $\sup_{x\in\Sigma}\sum_{t\in[0,1]}\deg_{\Sigma}(x+t\ell)=C''<\infty$ as otherwise, due to compactness of $\Sigma/\Z^n$, we would get a contradiction to the lower bound $\langle\nfield,\ell\rangle\ge\tau>0$. 
Define the positive Radon measure measure on $\R$,
\[\mu_{\Lambda}:=\sum_{t\in\Lambda}\deg_{\Lambda}(t)\delta_{t}.\]
Notice that $\sup_{t\in\R}\mu_{\Lambda}([t,t+1])\le C''<\infty$, so $\mu_{\Lambda}([-R,R]=O(R)$. We conclude that $\mu_{\Lambda}$ is tempered, since it is positive and has linear growth.

Our goal is to prove 
\[\hat{\mu}_{\Lambda}=\sum_{k\in\Z^n}\widehat{m}_{\ell}(k)\delta_{\langle k,\ell\rangle}.\]
Consider a family of univariate functions $\rho_{\epsilon}\in C_{c}^{\infty}(\R)$, such that $\mathrm{supp}(\rho_{\epsilon})\subset (-\frac{\epsilon}{2},\frac{\epsilon}{2})$, $\rho_{\epsilon}\ge 0$, and $\int_{t=-\epsilon/2}^{\epsilon/2}\rho_{\epsilon}(t)dt=1$. In particular, the distribution $\rho_{\epsilon}(t)dt$ converges in the weak topology to $\delta_{0}$, and $\hat{\rho}_{\epsilon}(\omega)\to 1$ for all $\omega\in\R$. Define the function 
\[G_{\epsilon}(t)=\sum_{t'\in\Lambda}\rho_{\epsilon}(t-t')=(\mu_{\Lambda}*\rho_{\epsilon})(t).\]
It follows that 
\[\int_{\R}G_{\epsilon}(t)\hat{f}(t)dt\to\int \hat{f}d\mu_{\Lambda}\quad\text{for any $f$ Schwartz on $\R$}.\]  
To prove the summation formula, and therefore the theorem, it is enough to show 
\[\int_{\R}\widehat{G}_{\epsilon}(t){f}(t)dt\to \sum_{k\in\Z^n}\widehat{m}_{\ell}(k){f}(\langle k,\ell\rangle),\quad\text{for any Schwartz function $f$ on $\R$}.\]
Now define the function $I_\epsilon\colon M\times[-\epsilon/2,\epsilon/2]\to\R^n$ by $I_\epsilon(y,t):=\iota(y)+t\ell$. The transversality condition $\langle\nfield,\ell\rangle\ge \tau> 0$ holds everywhere, due to part (1), which allows to conclude that, for $\epsilon$ small enough, $|I_\epsilon^{-1}(x)|$ is upper bounded by the maximum of $\deg_{\Sigma}$ (which is finite). We can also note that for any $x\in\R^n$,
\[\lim_{\epsilon\to 0}|I_\epsilon^{-1}(x)|=\begin{cases}
    0 &  x\notin\Sigma\\
    \deg_{\Sigma}(x) & x\in\Sigma
\end{cases}.\]
Define the function 
\[F_{\epsilon}\colon\R^n\to\R,\quad F_{\epsilon}(x):=\sum_{(y,t)\in I_{\epsilon}^{-1}(x)}\rho_{\epsilon}(t).\]
We claim that
\begin{enumerate}
    
    \item $G_\epsilon(t)=F_{\epsilon}(t\ell)$ for all $t\in\R$, and
    \item $F_{\epsilon}$ is $\Z^n$-periodic, and has absolutely convergent Fourier series. 
\end{enumerate}
Indeed, (1) follows from the the definitions of the functions. For (2), periodicity is due to the $\Z^n$-periodicity of $I_{\epsilon}$. We are left with computing $\widehat{F_{\epsilon}}(k)$:
\begin{align*}
    \widehat{F_{\epsilon}}(k)& = \int_{\R^n/\Z^n}F_{\epsilon}(x)e^{-2\pi i \langle k,x\rangle}dx\\
    & =\int_{y\in M}\int_{t=-\epsilon/2}^{\epsilon/2}e^{-2\pi i \langle k,\iota(y)+t\ell\rangle} \rho_{\epsilon}(t)\langle \nfield(y),\ell\rangle d\sigma_{M}(y)dt\\
    & =\int_{x\in \Sigma}e^{-2\pi i \langle k,x\rangle}\left(\int_{t\in\R}e^{-2\pi i t \langle k,\ell\rangle} \rho_{\epsilon}(t)dt\right)dm_{\ell}(x)\\
    & = 
    \hat{\rho}_{\epsilon}(\langle k,\ell\rangle)\widehat{m}_{\ell}(k),
\end{align*}
where the second equality is implied by the co-area formula applied to
$(y,t)\mapsto \iota(y)+t\ell$, and $d\sigma_{M}$ is the surface measure on $M$
induced by the immersion. 
Since $m$ is positive and lighthouse, one can bound
\[\sum_{k\in\Z^n}|\widehat{F_{\epsilon}}(k)|\le \widehat{m}_{\ell}(0)\sum_{k\in\Z^n\cap\left(C\cup-C\right)}|\hat{\rho}_{\epsilon}(\langle k,\ell\rangle)|<\infty ,\]
by Lemma \ref{lem: bounded}. This proves (2).

Claims (1) and (2) allow to conclude that $G_{\epsilon}$ has a Fourier expansion
\[G_{\epsilon}(t)=F_{\epsilon}(t\ell)=\sum_{k\in\Z^n}\widehat{F_{\epsilon}}(k)e^{2\pi i t\langle k,\ell\rangle }.\]
Therefore, for any $f$ Schwartz function on $\R$ we have
\[\int_{\R}\widehat{G}_{\epsilon}(t){f}(t)dt=\sum_{k\in\Z^n}\widehat{F_{\epsilon}}(k){f}(\langle k , \ell\rangle).\]

Now, since $\hat{\rho}_{\epsilon}(\langle k,\ell\rangle)\to 1$ for all $k$, for any Schwartz function $f$ on $\R$ we have:
\[\int_{\R}\widehat{G}_{\epsilon}(t){f}(t)dt=\sum_{k\in\Z^n}\hat{\rho}_{\epsilon}(\langle k,\ell\rangle)\widehat{m}_{\ell}(k){f}(\langle k , \ell\rangle)\to \sum_{k\in\Z^n}\widehat{m}_{\ell}(k){f}(\langle k , \ell\rangle).\qedhere\]
\end{proof}
\section{Preliminaries on essentially Lee--Yang polynomials}\label{sec:propleeyang}
The standard definition of a \emph{Lee--Yang polynomial}, see for example \cite{Ruelle2010}, is a polynomial  $p\in\C[z_1,\ldots,z_n]$ such that $p(z)\ne 0 $ for $z\in\mathbb{D}^n\cup(\C\setminus\overline{\mathbb{D}})^n$. Where $\mathbb{D}\subset\C$ is the open unit disc. Equivalently, see \cite{AlonCohenVinzant}*{Lemma 2.4} for example, $p\in\C[z_1,\ldots,z_n]$ is Lee--Yang if and only if $p$ is satisfying
\begin{enumerate}
    \item[(i)] $p(0)\ne 0$, and
    \item[(ii)] $p(\exp(z))\neq0$ for all $z\in\C^n$ with $\Rea(z)\in \R_{+}^n\cup\R_{-}^n$. 
\end{enumerate}
This is the reason for our definition of essentially Lee--Yang Laurent polynomials, as discussed in the introduction.
\begin{Def}
    A Laurent polynomial $p\in\C[z_1^{\pm},\ldots,z_n^{\pm}]$ is \emph{essentially Lee--Yang} with respect to a nonempty open  cone $K$ if $p(\exp(z))\neq0$ for all $z\in\C^n$ with $\Rea(z)\in -K\cup K$.
\end{Def}
\begin{rem}\label{rem:noconstantterm}
    Let $p\in\C[z_1,\ldots,z_n]$ be a polynomial that is not divisible by any non-constant monomial. If $p(\exp(z))\neq0$ for all $z\in\C^n$ with $\Rea(z)\in \R_{+}^n\cup\R_{-}^n$, then $p(0)\neq0$. Indeed, by \cite{shafarevich2}*{Lemma 1 on page 124} the zero set of $p$ in $(\C^*)^n$ is dense in its zero set in $\C^n$ (with respect to the Euclidean topology). Now if $p(0)=0$, then there would be a sequence $(x_i)_{i\in\N}$ in $(\C^*)^n$ with $p(x_i)=0$ and $\lim_{i\to\infty}z_i=0$. In particular, for large enough $i$, there is $z\in\C$ with $x_i=\exp(z)$ and $\Rea(z)\in\R_-^n$, contradicting our assumption. Thus, in the above definition of Lee--Yang polynomial, one can replace (i) by the seemingly weaker condition
    \begin{enumerate}
        \item[(i$'$)] $p$ is not divisible by any non-constant monomial.
    \end{enumerate}
    Since condition (i$'$) can always be achieved by multiplication with a suitable monomial, we will sometimes say, by slight abuse of notation, that a Laurent polynomial $q\in \C[z_1^{\pm},\ldots,z_n^{\pm}]$ is Lee--Yang if it satisfies condition (ii) above.
\end{rem}
For a matrix $A\in\Z^{m\times n}$ and $z\in\C^n$ we will write $ w=z^{A} \in \mathbb{C}^{m} $ for  
\[z^{A}:=(w_{1},\ldots,w_{m}),\quad w_{j} = \prod_{i=1}^{n} z_{i}^{A_{ji}}\]
We use element-wise exponent $\exp(z)=(e^{z_{1}},\ldots e^{z_{n}})$, so that $(\exp(z))^{A}=\exp(Az)$.
We will give a characterization of essentially Lee--Yang polynomials that explains the name: these are Lee--Yang polynomials up to a monomial change of coordinates. We first need a lemma.
\begin{lem}\label{lem:denselinerealrootedessly}
    Let $p\in\C[z_1^{\pm},\ldots,z_n^{\pm}]$ be a Laurent polynomial and $\ell\in\R^n$ a vector with $\Q$-linearly independent entries such that $p(\exp(2\pi i\ell t))$ has only real zeros. There exists $k\in\Z^n$, a matrix $A\in\SL_n(\Z)$, a Lee--Yang polynomial $q\in\C[z_1,\ldots,z_n]$ and a vector $\tilde\ell\in\R^n$ with positive entries such that $q(z)=z^k\cdot p(z^A)$ and $\ell=A\tilde\ell$.
\end{lem}
\begin{proof}
    After multiplying $p$ by a monomial, we can assume that $p$ is a polynomial.
    We consider the \emph{amoeba} of $p$, i.e.~the set
    \begin{equation*}
        \cA(p)=\{\log|z|:=(\log|z_1|,\ldots,\log|z_n|)\in\R^n\mid z\in(\C^*)^n\textrm{ and } p(z)=0\}.
    \end{equation*}
    By \cite{AlonCohenVinzant}*{Corollary 3.3} the punctured line $\R^*\ell$ is disjoint from $\cA(p)$. Now applying \cite{AlonCohenVinzant}*{Lemma 2.5} to both $\ell$ and $-\ell$ shows that there exists a rational polyhedral convex cone $K\subset\R^n$ that contains $\ell$ and such that $-\textnormal{int}(K)\cup\textnormal{int}(K)$ is disjoint from $\cA(p)$. 
    By \cite{toric}*{Theorem 11.1.9} there exist $a_1,\ldots,a_n\in K\cap\Z^n$, such that the matrix $A$ with columns $a_1,\ldots,a_n$ has $\det(A)=1$, and $\tilde\ell_1,\ldots,\tilde\ell_n\geq0$ such that $\ell=\tilde\ell_1a_1+\cdots+\tilde\ell_na_n$. Since the entries of $\ell$ are $\Q$-linearly independent, the $\tilde\ell_i$ must be strictly positive. Then $\ell=A\tilde\ell$ by construction. Consider the Laurent polynomial $\tilde{q}(z)=p(z^A)$ and let $z\in\C^n$ with $\Rea(z)\in \R_{+}^n\cup\R_{-}^n$ and assume that $\tilde{q}(\exp(z))=0$. Then $p(\exp(Az))=0$ and $$\log|\exp(Az)|=\Rea(Az)=A\Rea(z)\in -\textnormal{int}(K)\cup\textnormal{int}(K)$$ by construction. But this  contradicts $-\textnormal{int}(K)\cup\textnormal{int}(K)$ being disjoint from $\cA(p)$. Hence $\tilde{q}$ satisfies (ii) of the above definition of Lee--Yang polynomials. Finally, let $k\in\Z^n$ such that $q(z)=z^k\tilde{q}(z)$ is a polynomial that is not divisible by a non-constant monomial. Then $q$ is a Lee--Yang polynomial by Remark \ref{rem:noconstantterm}.
\end{proof}
\begin{thm}\label{thm:essleeyangchara}
    Let $p\in\C[z_1^{\pm},\ldots,z_n^{\pm}]$ be a Laurent polynomial. The following are equivalent:
    \begin{enumerate}
        \item $p$ is essentially Lee--Yang.
        \item There exists an open cone $\emptyset\neq K\subset\R^n$ such that $p(\exp(2\pi i\ell t)$ is real-rooted for all $\ell\in K$.
        \item There exists a vector $\ell\in\R^n$ with $\Q$-linearly independent entries such that $p(\exp(2\pi i\ell t))$ has only real zeros.
        \item There exists $k\in\Z^n$, a matrix $A\in\SL_n(\Z)$ and a Lee--Yang polynomial $q\in\C[z_1,\ldots,z_n]$ with $q(z)=z^kp(z^A)$.
    \end{enumerate}
\end{thm}
\begin{proof}
    ``$(1)\Rightarrow(2)$'': By definition there exists a nonempty open cone $K\subset\R^n$ such that $p(\exp(z))\neq0$ for all $z\in\C^n$ with $\Rea(z)\in -K\cup K$. Let $\ell\in K$ and $t_0\in\C$ with non-zero imaginary part $0\neq b\in\R$. Then $\Rea(2\pi i\ell t_0)=-2b\pi \ell\in -K\cup K$. Hence $t$ is not a zero of $p(\exp(2\pi i\ell t)$ which is thus real-rooted.

    ``$(2)\Rightarrow(3)$'': This is clear because every non-empty open set contains a point with $\Q$-linearly independent entries.

    ``$(3)\Rightarrow(4)$'': This is part of Lemma \ref{lem:denselinerealrootedessly}.

    ``$(4)\Rightarrow(1)$'':  Consider the open cone $K=A\R_{+
    }^n$. Let $z\in\C^n$ with $\Rea(z)\in A\R_{+
    }^n\cup A\R_{-
    }^n$. Letting $z'=A^{-1}z\in\C^n$, we have $\Rea(z')=A^{-1}\Rea(z)\in \R_{+
    }^n\cup \R_{-
    }^n$ and
    \begin{equation*}
        p(\exp(z))=p(\exp(Az'))=e^{-\langle  k,z'\rangle}q(\exp(z'))\neq0
    \end{equation*}
    because $q$ is Lee--Yang.
\end{proof}

In the following few lemmas, we prove some essential properties of essentially Lee--Yang Laurent polynomials.
We denote by $\T\subset\C$ the unit circle.
\begin{lem}\label{lem:layers}
    Let $p\in\C[z_1^{\pm},\ldots,z_n^{\pm}]$ be essentially Lee--Yang and let $Y\subset\T^n$ be its zero set in $\T^n$. We assume that $p$ is square-free in the sense that it does not contain multiple irreducible factors. The set of points $y\in Y$ where $\nabla p$ does not vanish is dense in $Y$.
\end{lem}
\begin{proof}
    Let $A\in\SL_n(\Z)$ and $k\in\Z^n$ such that $q=z^k\cdot (p\circ\psi_A)$ is Lee--Yang where $\psi_A\colon\T^n\to\T^n,\, z\mapsto z^A$. The zero set $Z$ of $q$ in $\T^n$ is the preimage of $Y$ under $\psi_A$. By \cite{AlonVinzant}*{Proposition 4.2} the claim is true for $q$ and $Z$. Now our claim follows from the fact that $\psi_A\colon\T^n\to\T^n$ is a diffeomorphism.
\end{proof}
We will also need the following result on the intersection with dense lines.
\begin{lem}\label{lem:linedense2}
    Let $\ell\in\R^n$ be a column vector whose entries are linearly independent over $\Q$ and let $\psi_\ell\colon\R\to\T^n$ be the map $\psi_{\ell}(t)=\exp(2\pi i t\ell)$. Let $Y\subset\T^n$ be the zero set of a Laurent polynomial $p\in\C[z_1^{\pm},\ldots,z_n^{\pm}]$. If $p$ is  essentially Lee--Yang, then $\im(\psi_\ell)\cap Y$ is dense in $Y$.
\end{lem}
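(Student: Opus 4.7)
The plan is to combine \Cref{lem:layers} (the smooth locus of~$Y$ is dense), an algebraic argument showing that the tangency locus is a proper analytic subset of~$Y$, and the classical flow-box together with Kronecker's equidistribution theorem. By \Cref{lem:layers}(1) the smooth locus $Y_{\mathrm{sm}}\subseteq Y$ is dense, so I will reduce to showing every $y\in Y_{\mathrm{sm}}$ lies in $\overline{\im(\psi_\ell)\cap Y}$. At each smooth point, $Y$ carries a real-analytic unit normal $\nfield_Y(y)\in\R^n$, and I split $Y_{\mathrm{sm}}=Y_{\mathrm{trans}}\sqcup Y_{\mathrm{tang}}$ according to whether $\langle\ell,\nfield_Y(y)\rangle\ne 0$; the two subgoals are that $Y_{\mathrm{trans}}$ is dense in~$Y$ and that $\im(\psi_\ell)\cap Y$ is dense in~$Y_{\mathrm{trans}}$.

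\emph{Step 1: the tangent locus is nowhere dense.} I introduce the Laurent polynomial
\[
p_\ell(z)\;:=\;\sum_{i=1}^n\ell_i\,z_i\,\partial_{z_i}p(z)\;=\;\sum_{\alpha\in\mathrm{supp}(p)} a_\alpha\,\langle\ell,\alpha\rangle\,z^\alpha,
\]
and verify by a direct calculation of $d_yp$ on $T_y\T^n\cong i\R^n\cdot y$ that at smooth $y\in Y$ one has $\ell\in T_yY$ iff $p_\ell(y)=0$, hence $Y_{\mathrm{tang}}=Y_{\mathrm{sm}}\cap\{p_\ell=0\}$. The $\Q$-linear independence of~$\ell$ makes $\alpha\mapsto\langle\ell,\alpha\rangle$ injective on~$\Z^n$, so $\mathrm{supp}(p_\ell)=\mathrm{supp}(p)\setminus\{0\}$. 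I then show that no irreducible factor~$p_j$ of the square-free part of~$p$ divides~$p_\ell$ unless $p_j$ is a monomial: writing $p=p_jq$ with $\gcd(p_j,q)=1$, Leibniz yields $p_\ell=(p_j)_\ell\,q+p_j\,q_\ell$, so $p_j\mid p_\ell$ would force $p_j\mid(p_j)_\ell$ and, by a Newton polytope comparison, $(p_j)_\ell=\lambda\,p_j$ for a scalar~$\lambda$; this is equivalent to $\langle\ell,\alpha\rangle=\lambda$ for every $\alpha\in\mathrm{supp}(p_j)$, contradicting $\Q$-linear independence whenever $|\mathrm{supp}(p_j)|\ge 2$. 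A CR-analytic continuation argument then promotes the vanishing of $p_\ell$ on a smooth component of~$Y$ to divisibility $p_j\mid p_\ell$ for the corresponding irreducible factor, using that $T_y\T^n\cap iT_y\T^n=\{0\}$ in $T_y(\C^\times)^n$ and hence $Y$ sits as a CR-generic real submanifold of $\{p=0\}$. Combining these facts, $Y_{\mathrm{tang}}$ meets each component of~$Y_{\mathrm{sm}}$ in a proper real-analytic subset, hence is nowhere dense in~$Y$.

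\emph{Step 2: flow-box at transverse points.} For $y\in Y_{\mathrm{trans}}$ and a small open $U\subseteq Y_{\mathrm{trans}}$ around~$y$, transversality of~$\ell$ to~$U$ gives $\delta>0$ so that
\[
\Phi\colon U\times(-\delta,\delta)\longrightarrow\T^n,\qquad\Phi(y',s)=y'\cdot e^{2\pi is\ell},
\]
is a diffeomorphism onto an open $N\subseteq\T^n$. By Kronecker's theorem $\overline{\im(\psi_\ell)}=\T^n$, so $\psi_\ell(t)\in N$ for some~$t\in\R$; writing $\psi_\ell(t)=\Phi(y_0,s_0)$, the point $\psi_\ell(t-s_0)=y_0$ belongs to $U\cap\im(\psi_\ell)\cap Y$. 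Shrinking~$U$ around~$y$ and combining with Step~1 gives $\overline{\im(\psi_\ell)\cap Y}=Y$.

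\emph{Main obstacle.} The crux is Step~1: identifying $Y_{\mathrm{tang}}$ with the vanishing locus of~$p_\ell$ and showing that $p\nmid p_\ell$ both rest essentially on the $\Q$-linear independence of~$\ell$, while the promotion from vanishing of $p_\ell$ on a real component of~$Y$ to polynomial divisibility $p_j\mid p_\ell$ requires the CR-generic position of~$Y$ inside~$\{p=0\}$, which itself reduces to the transversality $T_y\T^n\cap iT_y\T^n=\{0\}$.
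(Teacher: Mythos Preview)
Your argument is correct, but it takes a substantially more elaborate route than the paper's. Both proofs share the outer structure: use \Cref{lem:layers} to reduce to smooth points, show that the set of smooth points where $\ell$ is transverse to $Y$ is dense, and then use a flow-box/inverse-function-theorem argument together with Kronecker's theorem at transverse points. Your Step~2 and the paper's transverse argument are essentially identical.

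The real difference is Step~1. The paper first reduces to $p$ irreducible and then argues geometrically: if $\ell$ were tangent to $Y$ at every smooth point, the constant vector field $\ell$ would be tangent to $Y_{\mathrm{sm}}$, so its integral curve through any smooth $y_0$---namely a translate of the dense line---would lie in $Y$ on an open interval; since $t\mapsto p(y_0\cdot e^{2\pi i t\ell})$ is entire, it would vanish identically, and Kronecker forces $p\equiv0$. The transverse locus is therefore a nonempty Zariski open subset of $Y$, hence dense. You instead make the obstruction algebraic by introducing $p_\ell$, proving via a Newton-polytope/support argument that no non-monomial irreducible factor of $p$ can divide $p_\ell$, and then invoking the CR-generic position of $Y$ inside $\{p=0\}$ to pass from vanishing of $p_\ell$ on a real component of $Y_{\mathrm{sm}}$ to divisibility $p_j\mid p_\ell$. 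This is valid---your totally-real observation $T_y\T^n\cap iT_y\T^n=\{0\}$ indeed makes $Y_{\mathrm{sm}}$ a maximally real submanifold of the smooth locus of $\{p=0\}$, hence a local uniqueness set---but it is considerably heavier than the paper's two-line integral-curve argument. What your approach buys is an explicit polynomial witness $p_\ell$ for the tangency locus; what the paper's approach buys is brevity.

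One small repair: your identification $Y_{\mathrm{tang}}=Y_{\mathrm{sm}}\cap\{p_\ell=0\}$ and your factorization $p=p_jq$ with $\gcd(p_j,q)=1$ both require $p$ to be square-free; if $p_j^2\mid p$ then $p_\ell$ vanishes on $\{p_j=0\}$ automatically and your Leibniz step does not yield $p_j\mid(p_j)_\ell$. Since the statement concerns only the zero set $Y$, you should replace $p$ by its square-free part at the outset (as the paper effectively does by reducing to irreducible $p$).
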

\begin{proof}
    Without loss of generality we can assume that $p$ is irreducible.
    We identify the tangent space $T_z\T^n$ of $\T^n$ at $z$ with $\R^n$ via the differential of the covering map
    \begin{equation*}
        \R^n\to\T^n,\,\theta\mapsto\exp(2\pi i\theta).
    \end{equation*}
    If there exists a point $y\in Y$ with $\nabla p(y)\neq0$ such that the tangent space ${T}_yY\subset\R^n$ does not contain the vector $\ell$, then the dense line $\im(\psi_\ell)\subset\T^n$ intersects $Y$ in every neighborhood of $y$ by the inverse function theorem. If the tangent space of $Y$ at every such point contained $\ell$, then $p$ would vanish on a translate of the dense line $\im(\psi_\ell)$ and $p$ would be identically zero. Furthermore, the set of $y\in Y$, where $\ell$ is not orthogonal to the gradient of $p$, is a Zariski open subset. Since it is non-empty and since the points of $Y$, where the gradient of $p$ does not vanish, is dense in $Y$ by Lemma \ref{lem:layers}, this Zariski open set it is dense in $Y$. This shows the claim.
\end{proof}
Finally, we record some results on the gradient of essentially Lee--Yang polynomials. We begin with the corresponding result for Lee--Yang polynomials.
\begin{lem}\label{lem:leeyanglowerbound}
    Let $p\in\C[z_1^{\pm},\ldots,z_n^{\pm}]$ be Lee--Yang  and $\ell\in\R_{+}^n$. Define $F\colon\C^n\to\C$ by $F(z)=p(\exp(2\pi i z))$. There exists $c>0$ such that for all $x\in\R^n$ with $F(x)=0$ and $\nabla F(x)\neq0$:  $$\frac{|\langle\nabla F(x),\ell\rangle|}{\|\nabla F(x)\|}\geq c.$$
\end{lem}
\begin{proof}
    Consider the compact set $S=\{v\in\R_{\geq0}^n\mid \|v\|=1\}$. Because $\langle v,\ell\rangle>0$ for all $v\in S$, there exists $c>0$ such that $\langle v,\ell\rangle\geq c$ for all $v\in S$.
    Let $x\in\R^n$ with $F(x)=0$ and $\nabla F(x)\neq0$. Then by \cite{AlonVinzant}*{Proposition 3.5} all non-zero entries of $\nabla F(x)$ have the same phase. Thus $\frac{|\langle\nabla F(x),\ell\rangle|}{\|\nabla F(x)\|}=\langle v,\ell\rangle$ for some $v\in S$. This shows the claim.
\end{proof}

\begin{lem}\label{lem:essleeyanglowerbound}
    Let $p\in\C[z_1^{\pm},\ldots,z_n^{\pm}]$ be essentially Lee--Yang and $\ell\in\R^n$ as in part (3) of Theorem \ref{thm:essleeyangchara}, i.e., a vector with $\Q$-linearly independent entries such that $p(\exp(2\pi i \ell t)$ has only real zeros. 
    Define $G\colon\C^n\to\C$ by $G(z)=p(\exp(2\pi i z))$.
    There exists $c>0$ such that for all $x\in\R^n$ with $G(x)=0$ and $\nabla G(x)\neq0$:  $$\frac{|\langle\nabla G(x),\ell\rangle|}{\|\nabla G(x)\|}\geq c.$$
\end{lem}
\begin{proof}
    By Lemma \ref{lem:denselinerealrootedessly} there are a matrix $A\in\GL_n(\R)$, a Lee--Yang Laurent polynomial $q\in\C[z_1^{\pm},\ldots,z_n^{\pm}]$ and a vector $\tilde\ell\in\R^n$ with positive and $\Q$-linearly independent entries such that $p(\exp(Az))=q(\exp(z))$ for all $z\in\C^n$ and $A\tilde\ell=\ell$. 
    Define $F\colon\C^n\to\C$ by $F(z)=q(\exp(2\pi i z))$, so that $F(z)=G(Az)$. By Lemma \ref{lem:leeyanglowerbound} there exists $C_0>0$ with
      \begin{align*}
        C_0&\leq\frac{|\langle\nabla F(x),\tilde\ell\rangle|}{\|\nabla F(x)\|}=\frac{|\langle A^t\nabla G(Ax),\tilde\ell\rangle|}{\| A^t\nabla G(Ax)\|}=\frac{|\nabla G(Ax),\ell\rangle|}{\|A^t \nabla G(Ax)\|}\\ &\leq\|A^{-t}\|\frac{|\nabla G(Ax),\ell\rangle|}{\|\nabla G(Ax)\|}  
      \end{align*}
      for all $x\in\R^n$ with $F(x)=0$ and $\nabla F(x)\neq0$. Since $F(x)=0$ and $\nabla F(x)\neq0$ is equivalent to $G(Ax)=0$ and $\nabla G(Ax)\neq0$,  since $A$ is invertible, this shows that we can choose $c=\frac{C_0}{\|A^{-t}\|}$.
\end{proof}
\section{Proof of Theorem \ref{thm: moreover}}\label{sec: moreover}
Recall that if $q$ is any polynomial, then $\Sigma(q)$ is periodic by definition. If $q$ is a regular Lee--Yang polynomial, then $\Sigma(q)$ is an orientable periodic hypersurface. Finally, if we denote the normal vector field by $\nfield_{q}$ and the $\ell$-directed measure on $\Sigma(q)$ by $m_{\ell}^{q}$, then the following hold:
\begin{enumerate}
    \item[(i)] There is an orientation with $\nfield_{q}(x)\in\R_{\ge 0}^n$ for all $x\in\Sigma(q)$, see \cite{AlonVinzant}*{Proposition 3.5}. 
    \item[(ii)] For every $\ell\in \R_{+}^n $ one has
    $\mathrm{supp}(\widehat{m}_{\ell}^{q})\subset\Z_{\ge 0}^n\cup\Z_{\le 0}^n$, see \cite{alon2025higher}*{Theorem 4.32}.
\end{enumerate}
Now assume that $p$ is a regular essentially Lee--Yang polynomial.
Then there exists $A\in\Z^{n\times n}$ with $\det(A)\neq0$ such that $q(z)=p(z^A)$ is Lee--Yang. We choose $K=A\R_+^n$, $C=K^*$ and let $\ell\in K$, i.e.~$\ell=A\tilde\ell$ for some $\tilde\ell\in\R_+^n$.
This implies directly that $p$ is essentially Lee--Yang with respect to $K$.
The linear map $y(x)=Ax$ is a bijection from $\Sigma(q)$ to $\Sigma(p)$, so $\Sigma(p)$ is also a hypersurface and is $\Z^n$-periodic by definition. The tangent spaces are transformed similarly: $$v\in T_{x}\Sigma(q)\iff Av\in T_{y(x)}\Sigma(p).$$ Therefore, the normal $\nfield_{q}$ of $\Sigma(q)$ and the normal $\nfield_{p}$ of $\Sigma(p)$, satisfy
\[\nfield_{p}(y(x))=(A^{T})^{-1}\nfield_{q}(x).\]
Now we can relate the directional measures $m^{p}_{\ell}$ and $m^{q}_{\tilde{\ell}}$ on $\Sigma(p)$ and $\Sigma(q)$, respectively. Namely, by definition we have
\begin{equation*}
    dm_\ell^p(y)=|\langle\nfield_p(y),\ell\rangle|d\sigma(y)=|\det(A)|\cdot|\langle\nfield_q(x),\tilde\ell\rangle|d\sigma(x)=|\det (A)|dm_{\tilde\ell}^q(x).
\end{equation*}
Now we can calculate for $k\in\Z^n$:
\begin{align*}
\widehat{m}_\ell^p(k)=&\ \int_{y\in\Sigma(p)/\Z^n}e^{2\pi i \langle k,y\rangle}dm^p_{\ell}(y)\\
    \propto  &\  \int_{x\in\Sigma(q)/A^{-1}\Z^n}e^{2\pi i \langle k,Ax\rangle}dm^q_{\tilde{\ell}}(x)\\
    \propto &\  \widehat{m}^q_{\tilde{\ell}}(A^Tk).
\end{align*}
Now (ii) and the fact that $\tilde{\ell}\in\R_{+}^n$ implies $\widehat{m}_\ell^p(k)\neq0$ only if $A^Tk\in\Z^n_{\ge0}\cup\Z^n_{\le 0}$. Since for $k\in\Z^n$ the condition $A^Tk\in\Z_{\geq0}^n$ is equivalent to $k\in C=K^*$, this implies the claim.

\section{Proof of Theorem \ref{thm: FQ to p}}\label{sec: proof of FQ to p}
Before proving Theorem \ref{thm: FQ to p}, we need a lemma about the intersection of dense lines and immersed manifolds in the torus.
In the following, let $\T\subset\C$ be the unit circle and identify the tangent space $T_z\T^n$ of $\T^n$ at $z$ with $\R^n$ via the differential of the covering map
    \begin{equation*}
        \R^n\to\T^n,\,\theta\mapsto\exp(2\pi i\theta)=(e^{2\pi i \theta_{1}},\ldots,e^{2\pi i \theta_{n}}).
    \end{equation*}

\begin{lem}\label{lem:linedense1}
    Let $\ell\in\R^n$ be a column vector whose entries are linearly independent over $\Q$ and let $\psi_\ell\colon\R\to\T^n$ be the map $\psi_{\ell}(t)=\exp(2\pi i t\ell)$. Further let $\iota\colon X\to\T^n$ be a $C^1$-immersion of a closed $(n-1)$-dimensional manifold $X$, and $X_0$ the closure of $\im(\psi_\ell)\cap \iota(X)$. Assume that $X_0\neq\iota(X)$. Then there exists $x$ in the closure of the interior of $\iota^{-1}(X_0)$ such that the image of the tangent space ${T}_xX$ in $\R^n$ contains $\ell$.
\end{lem}
\begin{proof}
    We first note that it suffices to prove the claim for connected $X$. Let $X_1\subset X$ be the set of $x\in X$ so that the image of the tangent space ${T}_xX$ in $\R^n$ does not contain the vector $\ell$. Because $X$ is $C^1$, the set $X_1$ is open. Further, for every $x\in X_1$ there is an open neighborhood $U$ of $x$ such that the dense line $\im(\psi_\ell)\subset\T^n$ intersects $\iota(U)$ in a dense subset by the inverse function theorem. 
    This shows that $X_1$ is contained in the interior of $\iota^{-1}(X_0)$.
    Thus if $X_1$ has a boundary point $x$, then we are done. Indeed, since $X_1$ is open, such $x$ is not in $X_1$, so that the image of the tangent space ${T}_xX$ in $\R^n$ contains $\ell$. Because $\iota^{-1}(X_0)$ is closed, it contains its boundary point $x$ of $X_1$.
    
    If $X_1$ does not have a boundary point, then it is open and closed. Since we have assumed $X$ to be connected, this implies that $X_1=X$, which implies $X_0=\iota(X)$, or $X_1=\emptyset$. Hence, by assumption, we have $X_1=\emptyset$.
    This implies that the image of the tangent space ${T}_xX$ in $\R^n$ contains the vector $\ell$ for every $x\in X$. But then $\iota(X)$ contains a translate of the dense line $\im(\psi_\ell)$. As $\iota(X)$ is closed, this contradicts $X$ being of dimension $n-1$.
\end{proof}

The next proposition is very close to the main theorem of the section. We state it separately because it might be of independent interest.

\begin{prop}\label{prop:nosmoothness}
  Let $X\subset\T^n$ be a closed subset and let $\ell\in\R^n$ have $\Q$-linearly independent entries. Assume that the set $\Lambda=\{t\in\R\mid\exp(2\pi i t\ell)\in X\}$ satisfies the following two conditions:
  \begin{enumerate}
   \item There are positive integer weights $n_t\in\N$ such that $\mu=\sum_{t\in\Lambda}n_t\delta_t$ is a Fourier quasicrystal whose spectrum $S(\mu)$ is contained in the subgroup of $\R$ generated by the entries of $\ell$.
   \item The set $\{\exp(2\pi i t\ell)\mid t\in\Lambda\}$ is dense in $X$.
  \end{enumerate}
  Then $X$ is the zero set in $\T^n$ of an essentially Lee--Yang Laurent polynomial $p$ with the property that the exponential polynomial $p(\exp(2\pi i t\ell))$ is real rooted.
\end{prop}
\begin{proof}
    By \cite{olevskii2020fourier}*{Theorem 8(i)} there is a real rooted exponential polynomial
    \begin{equation*}
        f(t)=\sum_{j=0}^Nc_je^{2\pi i \lambda_jt}
    \end{equation*}
    for some $N\in\N$, $c_j\in\C^\times$ and $0=\lambda_0<\lambda_1<\cdots<\lambda_N$ whose zero set is $\Lambda$. 
    It follows from the proof of \cite{olevskii2020fourier}*{Theorem 8(i)} that one can choose the frequencies $\lambda_j$ from the subgroup of $\R$ generated by $S(\mu)$. Then, by assumption, the $\lambda_j$ are in the subgroup of $\R$ generated by the entries of $\ell$. Thus there are (unique) $k_0,\ldots,k_N\in\Z^n$ with $\lambda_j=\langle k_j,\ell\rangle$ for $j=0,\ldots,N$.  We claim that the Laurent polynomial $p=\sum_{j=0}^Nc_jz^{k_j}$ has the desired properties. By construction
    \begin{equation}\label{eq:pessly}
        p(\exp(2\pi i t\ell))=f(t),
    \end{equation}
    which is real rooted. Moreover, this implies that $p$ is essentially Lee--Yang by Theorem \ref{thm:essleeyangchara}. Let $Y\subset\T^n$ be the zero set of $p$. Then:
    \begin{equation*}
        Y=\overline{Y\cap\{\exp(2\pi i t\ell)\in\T^n\mid t\in\R\}}=\overline{\{\exp(2\pi i t\ell)\in\T^n\mid t\in\Lambda\}}=X,
    \end{equation*}
    where the first equality is Lemma \ref{lem:linedense2}, the second one follows from (\ref{eq:pessly}) and the fact that $\Lambda$ is the zero set of $f$, and the last equality follows from assumption (2).
\end{proof}

Now we are ready to prove the main theorem of the section.
\begin{thm}\label{thm:FQimpliesalgebraic}
    Let $\iota\colon X\to\T^n$ be a $C^1$-immersion of a closed $(n-1)$-dimensional manifold $X$. Let $\ell\in\R^n$ have $\Q$-linearly independent entries. If the multiset $\Lambda=\{t\in\R\mid\exp(2\pi i t\ell)\in\iota(X)\}$ is a one-dimensional FQ whose spectrum $S(\Lambda)$ is contained in the subgroup of $\R$ generated by the entries of $\ell$, then $\iota(X)$ is the zero set in $\T^n$ of an essentially Lee--Yang Laurent polynomial $p\in\C[z_1^{\pm},\ldots,z_n^{\pm}]$.  
\end{thm}
\begin{proof}
    Let $Y$ be the closure of $\{\exp(2\pi i t\ell)\mid t\in\Lambda\}$. By Proposition \ref{prop:nosmoothness} it is the zero set of an essentially Lee--Yang Laurent polynomial $p$ such that $p(\exp(2\pi i t\ell))$ is real rooted. Because $\iota(X)$ is closed, we have $Y\subset\iota(X)$. Assume for the sake of a contradiction that this is a strict inclusion.   
    By Lemma \ref{lem:linedense1} there exists an open subset $U\subset X$ with $\iota(U)\subset Y$ and     
    $x\in\overline{U}$ such that the image of the tangent space ${T}_xX$ in $\R^n$ contains $\ell$. 
    By Lemma \ref{lem:layers} there is a sequence $(x_i)_{i\in\N}\subset Y$ that converges to $\iota(x)$ such that $\nabla p(x_i)\neq0$ for all $i\in\N$.    
    Letting $G\colon\C^n\to\C$ defined by $G(z)=p(\exp(2\pi i z))$, this implies in particular that $\lim_{i\to\infty}\frac{|\langle\nabla G(y_i),\ell\rangle|}{\|\nabla G(y_i)\|}=0$ for a sequence $(y_i)_{i\in\N}\subset\R^n$ with $G(y_i)=0$ and $\nabla G(y_i)\neq0$: choose $y_i$ in the preimage of $x_i$ under $\R^n\to\T^n,\,y\mapsto\exp(2\pi iy)$. However, because $p(\exp(2\pi i \ell t))$ has only real zeros, this is a contradiction to Lemma \ref{lem:essleeyanglowerbound}. Hence $Y=\iota(X)$.
\end{proof}

\begin{proof}[Proof of Theorem \ref{thm: FQ to p}]
    We apply Theorem \ref{thm:FQimpliesalgebraic} to
    \begin{equation*}
        X=\{\exp(2\pi i \theta)\mid\theta\in\Sigma\}\subset\T^n
    \end{equation*}
    and get that 
    \[\Sigma=\{x\in\R^n\ \mid \ p(\exp(2\pi i x))=0 \},\]
    for some essentially Lee--Yang polynomial Laurent polynomial $p$. Finally, we can multiply $p$ by a suitable monomial, which does not change its zero set in $\T^n$,  such that the product is a polynomial. 
\end{proof}
\section{Appendix}

\subsection{Gaussian Preliminaries}
To prove Lemma \ref{lem:Gaussian}, we need the following preliminaries:
\begin{enumerate}
    \item[(i)] The integral of an $n$ dimensional Gaussian with variance $N^{-2}$ is  
\begin{equation}\label{eq: gauss int}
    \int_{\R^n}e^{-\tfrac{N^2\|x\|^2}{2}}dx=\tfrac{(2\pi)^{n/2}}{N^n}.
\end{equation}
\item[(ii)] There exists $c_n>0$ such that for all $R>0$ the following inequality holds:
\begin{equation}\label{eq: gauss tail}
 \int_{\R^n\setminus B_{R}(0)}e^{-\tfrac{N^2\|x\|^2}{2}}dx\le
\tfrac{c_{n}}{RN^{n+1}} e^{-\tfrac{N^2R^2}{2n}}.
\end{equation}
\item[(iii)] 
There exists $c_n'>0$ such that for any $N\in\N$, any $R>1$, and for all $\xi\in\R^n$,
\begin{equation}\label{eq: disc gauss tail}
    \sum_{k\in\Z^n\setminus B_{RN}(\xi)}e^{-\tfrac{\|k-\xi\|^2}{2N^2}}\leq c'_n (RN)^ne^{-\frac{R^2}{2}}.
\end{equation}
\end{enumerate}
\begin{proof}[Proof of (ii)]
    For $N=1$, take $n$ i.i.d.~normal random variables $Z_{j}\sim \mathrm{N}(0,1)$. It is a standard estimate that  
$\mathbb{P}(\|(Z_{1},\ldots,Z_{n})\|>t)\le \frac{c_{n}}{t}e^{-\frac{t^2}{2n}}$ for some $c_{n}>0$. To see why, notice that $\|(Z_{1},\ldots,Z_{n})\|>t$ implies $|Z_{j}|>\frac{t}{\sqrt{n}}$ for some $j$, and for a single variable we have $\mathbb{P}(|Z_{j}|>t)\le \frac{1}{\sqrt{2\pi}t}e^{-\frac{t^2}{2}} $, see for example \cite{vershynin2018high}*{Proposition 2.1.2}. Now change of variables proves it for all $N$.
\end{proof}
\begin{proof}[Proof of (iii)]
Consider the lattice $L=\frac{1}{\sqrt{2\pi}N}\cdot\Z^n$ and let $u=-\frac{\xi}{\sqrt{2\pi}N}$. Then we have
\[\sum_{k\in\Z^n\setminus B_{RN}(\xi)}e^{-\tfrac{\|k-\xi\|^2}{2N^2}}=  \sum_{x\in (u+L)\setminus B_{R/\sqrt{2\pi}}(0)}e^{-\pi\|x\|^2}.\]
By \cite{Banaszczyk}*{Lemma 1.5.(ii)} we have
\begin{equation*}
    \sum_{x\in (u+L)\setminus B_{R/\sqrt{2\pi}}(0)}e^{-\pi\|x\|^2} \leq 2  R^n \left(\frac{e}{n}\right)^{\frac{n}{2}}e^{-\frac{R^2}{2}}\cdot \sum_{x\in L}e^{-\pi\|x\|^2}.
\end{equation*}
Next, by \cite{Banaszczyk}*{Lemma 1.4.(i)} we can bound
\begin{equation*}
    \sum_{x\in L}e^{-\pi\|x\|^2}\leq \left(\sqrt{2\pi}N\right)^{{n}}\cdot\sum_{k\in\Z^n}e^{-\pi\|k\|^2}.
\end{equation*}
Combining the above identity proves (iii) with 
\begin{equation*}
     c'_n=2\cdot\left(\frac{2\pi e}{n}\right)^{\frac{n}{2}}\cdot\sum_{k\in\Z^n}e^{-\pi\|k\|^2}.\qedhere
\end{equation*}
\end{proof}
\subsection{Proof of Lemma \ref{lem:Gaussian}}
As a reminder, the setting of the lemma is as follows. We assume $m$ is a positive Radon measure on $\R^n$ that is $\Z^n$-periodic. We assume that there is some $B\subset\R^n$, an open ball around the origin, such that $\Sigma:=\mathrm{supp}(m)\cap B$ is a $C^{1+\epsilon}$ submanifold of dimension {$c$} and $0\in\Sigma$. We further assume that there is a positive continuous function $\rho$ such that 
\[dm|_{B}=\rho d\sigma,\]
where $\sigma$ is the surface measure on $\Sigma$ (induced from the Lebesgue measure on $\R^n$). For every $N\in\N$ we define the normalization constant
    \[C_{N}=\int_{\R^n}e^{-\frac{N^2\|x\|^2}{2}}dm(x).\]
With this setting, Lemma  \ref{lem:Gaussian} states that there exists $N_{0}$, such that for any $N>N_{0}$ and any $\xi\in\R^n$,
\begin{align}
	\frac{1}{C_{N}}\int_{\R^n\setminus B(0,\frac{\log N}{N})}e^{-\frac{N^2\|x\|^2}{2}}dm(x) & \le\ 0.1\\
	\frac{(2\pi)^{n/2}}{C_{N}N^{n}}\sum_{k\in\Z^n\setminus B(\xi,N\log N)}e^{-\frac{\|k-\xi\|^2}{2N^2}} & \le\ 0.1
\end{align}

\begin{proof}[Proof of Lemma \ref{lem:Gaussian}]
We may normalize $m$ such that $\widehat{m}(0)=\int_{\R^n/\Z^n}dm=1$. We now estimate $C_N$. Given $r>0$, we can estimate 
\[C_{N}=C_{N,r}+\mathrm{Err}(N,r),\]
with
\begin{equation*}
    C_{N,
r}=  \int_{\|x\|<r}e^{-\frac{N^2\|x\|^2}{2}}dm(x)\textrm{ and }
\mathrm{Err}(N,r) =  
\int_{\|x\|\ge r}e^{-\frac{N^2\|x\|^2}{2}}dm(x).
\end{equation*}   
Since 
{ $$e^{-\frac{(N+1)^2\|x\|^2}{2}}=e^{-\frac{(2N+1)\|x\|^2}{2}}e^{-\frac{N^2\|x\|^2}{2}}\le e^{-\frac{(2N+1)r^2}{2}}e^{-\frac{N^2\|x\|^2}{2}} $$}
when $\|x\|\ge r$, we can conclude that $\mathrm{Err}(N+1,r)\le e^{-\frac{(2N+1)r^2}{2}}\mathrm{Err}(N,r)$ for all $N$. 
Since $\mathrm{Err}(1,r)$ is finite, iterating the inequality  gives
{\[\mathrm{Err}(N,r)\le e^{-\frac{r^2}{2}\sum_{j=2}^N(2j+1)}\mathrm{Err}(1,r)=O(e^{-\frac{N^2 r^2}{2}}). \]}
By setting $r=\frac{\log N}{N}$, 
and taking $N$ large enough so that $B(0,\frac{\log N}{N})\subset B$ we conclude that 
\[C_{N}=\int_{x\in \Sigma\cap B(0,\frac{\log N}{N})}e^{-\frac{N^2\|x\|^2}{2}}\rho(x)d\sigma(x)+O(N^{-\frac{\log N}{2}}).\]
Let $c=\dim(\Sigma)$ and $d=n-c$.
Up to rotation, we may assume that $T_{0}\Sigma=\R^{c}\times\{0\}$ in the $\R^n=\R^{c}\times\R^d$ decomposition. For large enough $N$, $\Sigma\cap B(0,\frac{\log N}{N})$ is the graph of a $C^{1+\epsilon}$-function $v\colon\R^{c}\to\R^d$, with $v(0)=0$ and $Dv(0)=0$. In particular, \[\Sigma\cap B(0,\frac{\log N}{N})\subset\{(y,v(y))\mid \|y\|\le \frac{\log N}{N} \},\]
and there exists a constant $C>0$ such that $\|v(y)\|<C \|y\|^{1+\epsilon}$ for all $y\in\R^c$ {with $\|y\|\le \frac{\log N}{N}$}. In the coordinates $x=(y,v(y))$ the surface measure is $d\sigma(x)=\sqrt{\det(\mathrm{id+Dv(y)^T Dv(y)})}dy$. The following function is continuous and strictly positive
\[\varphi(y)=\rho(y,v(y))\sqrt{\det(\mathrm{id+Dv(y)^T Dv(y)})},\]
so $\varphi(y)=\varphi(0)(1+o(N))$ uniformly over $y\in B(0,\frac{\log N}{N})$. Therefore,
\[C_{N,\frac{\log N}{N}}=\varphi(0)(1+o(N))\int_{\|y\|^2+\|v(y)\|^2<\frac{\log^2 N}{N^2}}e^{-\frac{N^2(\|y\|^2+\|v(y)\|^2)}{2}}dy.\]
Notice that when $\|y\|<\frac{\log N}{N}$, for large enough $N$, $\|v(y)\|<0.1\|y\|$, in which case
\begin{align*}
\|y\|^2
\le & \|y\|^2+\|v(y)\|^2\le 1.01\|y\|^2\\
  e^{-\frac{(1.01)N^{2}\|y\|^2}{2}}< & e^{-\frac{N^2(\|y\|^2+\|v(y)\|^2)}{2}}<e^{-\frac{N^2\|y\|^2}{2}}  
\end{align*}
Using \eqref{eq: gauss int} and $\eqref{eq: gauss tail}$ we conclude
\begin{align*}
  C_{N,\frac{\log N}{N}}
\le & \ \varphi(0)(1+o(N))\int_{\|y\|^2<\frac{\log^2 N}{N^2}}e^{-\frac{N^2\|y\|^2}{2}}dy\\
= & \ \varphi(0)(1+o(N))\left(\frac{(2\pi)^{\frac{c}{2}}}{N^c}+O(N^{-\frac{\log N}{2n}})\right).  
\end{align*}
and since $\|y\|^2\le \frac{\log^2 N}{1.01N^2}\Rightarrow \|y\|^2+\|v(y)\|^2\le \frac{\log^2 N}{N^2}$, we can lower bound
\begin{align*}
  C_{N,\frac{\log N}{N}}
\ge & \ \varphi(0)(1+o(N))\int_{\|y\|^2<\frac{\log^2 N}{1.01 N^2}}e^{-\frac{1.01 N^2\|y\|^2}{2}}dy\\
= &\ \varphi(0)(1+o(N))(\frac{(2\pi)^{\frac{c}{2}}}{({\sqrt{1.01}}\  N)^c}+O(N^{-\frac{\log N}{2n}})).  
\end{align*}
We conclude that there are $c_1,c_2$ positive constants such that for large enough $N$
\[c_1 N^{-c}\le C_{N,\frac{\log N}{N}}\le c_2 N^{-c}.\]	Now, using $\mathrm{Err}(N,\frac{\log N}{N})=O(N^{-\frac{\log N}{2}})$ we prove \eqref{gaussian 1}  
\[\frac{\mathrm{Err}(N,\frac{\log N}{N})}{C_{N}}\le  \frac{O(N^{-\frac{\log N}{2}})}{c_1 N^{-c}+O(N^{-\frac{\log N}{2}})}=O(N^{{ c}-\frac{\log N}{2}})\le 0.1,\]
for large enough $N$. Using \eqref{eq: disc gauss tail} with $R=\log N$ we can bound, for any $\xi\in\R^n$ 
\begin{equation*}
    \frac{(2\pi)^{n/2}}{C_{N}N^{n}}\sum_{k\in\Z^n\setminus B(\xi,N\log N)}e^{-\tfrac{\|k-\xi\|^2}{2N^2}}\leq \frac{(2\pi)^{n/2}}{c_{1}N^{n-c}} c'_n (N\log N)^n N^{-\frac{\log N}{2}}\le 0.1,
\end{equation*}
for large enough $N$, proving \eqref{gaussian 3}.
\end{proof}

\bibliographystyle{plain}
\bibliography{biblio}

 \end{document}